\newlength{\depthofsumsign}
\newtheorem{theorem}{Theorem}[section]
\newtheorem{corollary}[theorem]{Corollary}
\newtheorem{lemma}[theorem]{Lemma}
\newtheorem{proposition}[theorem]{Proposition}
\theoremstyle{definition}
\newtheorem{definition}[theorem]{Definition}
\newtheorem{remark}[theorem]{Remark}
\newtheorem{question}[theorem]{Question}
\numberwithin{equation}{section}
\newcommand{\norm}[1]{\left\Vert #1\right\Vert}
\newcommand{\DD}{{\mathbb D}}
\newcommand{\M}{{\mathcal M}}
\newcommand{\TT}{{\mathbb T}}
\newcommand{\C}{{\mathbb C}}
\newcommand{\N}{{\mathbb N}}
\newcommand{\Z}{{\mathbb Z}}
\newcommand{\R}{{\mathbb R}}
\newcommand{\T}{{\mathbb T}}
\newcommand{\OO}{O}
\newcommand{\nsum}[1][1.4]{
	\mathop{%
		\raisebox
		{-#1\depthofsumsign+1\depthofsumsign}
		{\scalebox
			{#1}
			{$\displaystyle\sum$}%
		}
	}
}
\newcommand{\bsum}[1][1.2]{
	\mathop{%
		\raisebox
		{-#1\depthofsumsign+1\depthofsumsign}
		{\scalebox
			{#1}
			{$\displaystyle\sum$}%
		}
	}
}
\begin{document}

	\title[Strongly Kreiss bounded operators on  $L^p$-spaces]
	{On the growth rate of powers of a strongly Kreiss bounded operator on  $L^p$-spaces}
	
	\author{Loris Arnold}
	\address{UMR CNRS 6139, Laboratoire de math\'ematiques Nicolas Oresme, Univ Normandie}
	\email{lfj.arld@gmail.com}

	\author{Christophe Cuny}
	\address{UMR CNRS 6205, Laboratoire de Math\'ematiques de Bretagne Atlantique, Univ Brest }
	\email{christophe.cuny@univ-brest.fr}

	\subjclass[2010]{Primary: 47A35, 42A61}
	\keywords{Kreiss resolvent condition, power-boundedness, mean ergodicity, Ces\`aro boundedness, Fourier multipliers, Littlewood-Paley inequality}

	\begin{abstract}
		Let $T$ be a strongly Kreiss bounded linear operator on $L^p$. We obtain a bound on  the rate of growth of the norms of the powers of $T$. The bound is optimal with respect to the polynomial scale. The proof makes use of Fourier multipliers, in particular of the Littlewood-Paley inequalities on arbitrary intervals as initiated by Rubio de Francia and developed by  Kislyakov and Parilov.
	\end{abstract}
	
	\maketitle
	\vspace*{-0.6 cm}

	\medskip
	\section{Introduction}
	
	Let $T$ be a bounded linear operator on a Banach space $X$. We study the 
	growth rate of $\|T^n\|$ when $X=L^p(\M,\Sigma,\mu)$, where $(\M,\Sigma, \mu)$ is a $\sigma$-finite measure space and $T$ 
    satisfies the so-called strong Kreiss
    condition. In the sequel, we shall simply write $L^p(\M)$ or even $L^p$ when no confusion is possible.
	
	\medskip
	
	To put our result into a proper context, let us first review several basic statements on the asymptotics of powers of Kreiss operators.
	
	\medskip
	
	In \cite{Kreiss}, Kreiss introduced the following conditions:
	\begin{equation}\label{Kreiss}
		\sigma(T) \subset  \overline{\mathbb{D}} \quad \text{and} \quad \|(\lambda-T)^{-1}\|\le \frac{C}{|\lambda|-1}\, , \quad |\lambda|>1\, ,
	\end{equation}
	where $\DD$ denotes the unit disk and $\sigma(T)$ stands for the spectrum of $T$. Moreover, Kreiss proved that if 
	$X$ is  a \emph{finite dimensional} Banach space, 
	\eqref{Kreiss}  is equivalent to the power-boundedness of $T$, i.e., $\sup_{n\in \N}\|T^n\|<\infty$.

	\medskip
	
	Later, McCarthy \cite{McCarthy} considered the following strengthening of \eqref{Kreiss}:
	\begin{equation}\label{iter-Kreiss}
		\sigma(T) \subset  \overline{\mathbb{D}}\quad \text{and} \quad  \|(\lambda -T)^{-k}\|\le \frac{C}{(|\lambda|-1)^k}\, , \quad |\lambda|>1, \, k\in \N\, ,
	\end{equation}
	known as the strong Kreiss condition (or iterated Kreiss condition). The operators satisfying \eqref{iter-Kreiss} will be called strongly Kreiss
	bounded. We denote by $C_{SK}$ the smallest constant $C>0$ such that \eqref{iter-Kreiss} holds.

	\medskip
	
	Lubich and Nevanlinna \cite{LN} proved that \eqref{Kreiss} implies $\|T^N\|={\underset{N\rightarrow \infty}{\mathcal{O}}}(N)$ and, by a result of Shields (see \cite[Theorem 1 and Proposition 3]{Shields}), this is optimal. 
	
	\medskip

	Moreover, Lubich and Nevanlinna also proved (see \cite[Theorem 2.1 and Example 2.2]{LN}) that \eqref{iter-Kreiss} 
	implies $\|T^N\|=\underset{N\rightarrow \infty}{\mathcal{O}}(\sqrt N)$, and that this estimate is the best possible for general Banach spaces. In \cite[Proposition 1.1]{Nevanlinna}, Nevanlinna proved that an operator $T$ satisfies the strong Kreiss condition  if and only if there exists $L>0$ such that 
	\begin{equation}\label{strong-kreiss}
		\|{\rm e}^{z T}\|\le L{\rm e}^{|z|}\, , \quad  z\in \C\,,
	\end{equation}
	and it turns out that the smallest constant $L>0$ such that \eqref{strong-kreiss} holds is $C_{SK}$ (see the sentence preceding \cite[formula (2)]{GZ}).
	When $X$ is a Hilbert space, 
	\eqref{strong-kreiss} implies $\|T^N\|=\underset{N\rightarrow \infty}{\mathcal{O}}((\log N)^\kappa)$, for some $\kappa>0$ (see \cite[Theorem 4.5.]{CCEL}). Moreover, by \cite[Proposition 4.9.]{CCEL}, for any $\kappa>0$ and any $1\le p < \infty$, the bounded operator $T_{\kappa}$ on $\ell^p(\N)$ defined by
	\begin{equation}\label{exampleStrKreiss}
		T_{\kappa}\big((x_n)_{n\in \N}\big) = \Big(\frac{\log^{\kappa}(n+2)}{\log^{\kappa}(n+1)}x_{n+1}\Big)_{n \in \N} 
	\end{equation}
	is a strongly Kreiss bounded operator $T$ such that for every $N\in \N$
	$$\|T_{\kappa}^N\|= \frac1{\log^{\kappa} (2)}\log^{\kappa} (N+2).
	$$

	\medskip
	
	Norm bounds for powers of operators 
	satisfying the Kreiss condition were also obtained in \cite{CCEL}. In \cite{Cuny}, these bounds were extended to $L^p$-spaces, $1< p<\infty$,
	(and more generally, to UMD Banach spaces with non-trivial type and/or cotype, see \cite{Cuny} for the definition and properties of UMD Banach spaces). However, the norm estimates for powers of strongly Kreiss bounded operators on $L^p$ spaces have not been addressed in \cite{Cuny}, and it is the purpose of this paper to study these estimates thoroughly. In particular, we obtain the next statement, which is the main result of the paper.
	
	\begin{theorem}\label{main-theorem}
		Let $T$ be a strongly Kreiss bounded operator on $L^p(\M)$, $1<p<\infty$, and let $\tau_p:=\left|\frac{1}{2}-\frac{1}{p}\right|$. There exist constants $C,\kappa>0$ depending only on $p$ and $C_{SK}$ such that for every
		$N\in \N$,
		\begin{equation}\label{main-estimate}
			\norm{T^N} \le C N^{\tau_p}\log^\kappa (N+1)\, .
		\end{equation}
	\end{theorem}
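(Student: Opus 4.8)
The plan is to exploit the characterization \eqref{strong-kreiss} by transferring the problem to a question about Fourier multipliers on $L^p$. The starting point is the standard integral representation: writing $\e^{rzT}$ for $r\in(0,1)$ and $z$ on the unit circle, the power $T^N$ can be recovered by a Cauchy-type formula
\begin{equation*}
T^N = \frac{N!}{(rN)^N}\cdot\frac{1}{2\pi}\int_0^{2\pi} \e^{rN \e^{\imath\theta} T}\,\e^{-\imath N\theta}\,d\theta ,
\end{equation*}
which, combined with $\norm{\e^{rN\e^{\imath\theta}T}}\le L\e^{rN}$ and Stirling, already recovers $\norm{T^N}=\mathcal O(\sqrt N)$. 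To do better, the idea is to localize in the frequency variable: instead of extracting a single frequency, decompose the entire function $z\mapsto \e^{rN z T}$ (or rather a suitable truncation of its Taylor series $\sum_{k\le M}\frac{(rN)^k z^k}{k!}T^k$) into dyadic frequency blocks and estimate each block using that the operator-valued symbol has controlled growth.

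\smallskip

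The key steps, in order, are as follows. \emph{Step 1.} Fix $r\in(0,1)$ close to $1$ and a cutoff $M\approx N$, and write $T^N$ in terms of the trigonometric polynomial $P_N(\e^{\imath\theta}) := \sum_{k=0}^{M} \frac{(rN)^k}{k!}\e^{\imath k\theta}T^k$, whose coefficients are, up to the scalar weights $\frac{(rN)^k}{k!}$ (sharply peaked near $k\approx rN$), the operators $T^k$; controlling $\norm{P_N}$ acting on $L^p(\mu;\ell^2)$-valued data will control $\norm{T^N}$ after dividing by the peak weight $\sim (rN)^N/N!\sim \e^{-N}$-type factor and summing a geometric-type series in the choice of $r$. \emph{Step 2.} Use \eqref{strong-kreiss} to control the partial-sum (Taylor truncation) operators: since $\norm{\e^{wT}}\le L\e^{|w|}$, a contour integral of $\e^{wT}$ against an appropriate kernel shows the truncated exponentials $\sum_{k\le m}\frac{w^k}{k!}T^k$ are bounded, uniformly for $|w|$ up to $\sim N$, by $C\e^{|w|}$ times a logarithmic factor — this is where the logarithm in the final bound is born. \emph{Step 3.} Invoke the Littlewood--Paley inequality on arbitrary intervals, in the form developed by Rubio de Francia and sharpened by Kislyakov--Parilov: for $L^p(\mu)$ with $1<p<\infty$, the square-function estimate associated to an arbitrary collection of disjoint frequency intervals is bounded, with the crucial feature that for $p\ne 2$ one direction of the inequality costs a factor $N^{\tau_p}$ (and only a power of $\log$) when there are $\sim N$ intervals — this is precisely the source of the exponent $\tau_p=|\tfrac12-\tfrac1p|$. \emph{Step 4.} Assemble: decompose the frequency support $\{0,1,\dots,M\}$ into $O(N)$ unit blocks (or $O(\log N)$ dyadic blocks, depending on the bookkeeping), apply the multiplier bound block-by-block using Step 2 on each block, recombine via Step 3, and optimize over $r$ and $M$ to obtain \eqref{main-estimate}.

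\smallskip

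\textbf{Main obstacle.} The delicate point is Step 3 together with its interface to Step 2: one must arrange the frequency decomposition so that the operator-valued multiplier restricted to each block has \emph{norm bounded independently of the block} (or with only logarithmic dependence), since any polynomial-in-$N$ loss per block would be fatal once there are $N$ blocks. This forces the use of the strong Kreiss hypothesis in the sharp form of Step 2 — uniform control of \emph{all} truncated exponentials, not merely of $T^N$ itself — and it requires the vector-valued ($L^p(\mu;\ell^2)$) formulation of the Kislyakov--Parilov Littlewood--Paley inequality, whose constant must be tracked explicitly in the number of intervals. A secondary technical nuisance is that $L^p(\mu)$ need not have an unconditional basis of characters, so the frequency localization must be implemented purely through $L^p$-bounded Fourier multipliers (smooth cutoffs, not sharp ones) and the passage from the continuous entire function $\e^{wT}$ to a trigonometric polynomial of controlled degree must be done carefully to keep all multiplier norms under control; but these are the routine parts once the sharp square-function input of Step 3 is in place.
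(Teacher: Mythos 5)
You have correctly identified the two main ingredients of the argument: the strong Kreiss hypothesis in the exponential form $\norm{\e^{zT}}\le L\e^{|z|}$, exploited through the concentration of the weights $N^k/k!$ near $k\approx N$, and the Rubio de Francia/Kislyakov--Parilov Littlewood--Paley inequality on arbitrary intervals, whose constant $L^{|1/2-1/p'|}$ in the number $L$ of intervals is indeed the source of the exponent $\tau_p$. However, your Step 4 contains a genuine gap, in fact a circularity. You propose to "apply the multiplier bound block-by-block using Step 2 on each block and recombine via Step 3" in a single pass. But Step 2 only controls the full truncated exponentials $\sum_{k\le m}\frac{w^k}{k!}T^k$; it gives no bound on an individual frequency block $\sum_{k\in I}\gamma^kT^k x$ for a short interval $I$ near $k\approx N$. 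Bounding such a block is essentially equivalent to bounding $\norm{T^k}$ for $k\in I$, which is what the theorem is trying to prove (and which genuinely grows like $N^{\tau_p}$ by Proposition \ref{example}, so no $O(1)$-per-block estimate can hold). A one-pass decompose-and-recombine scheme therefore cannot close; at best it reproduces the trivial $O(\sqrt N)$ bound.

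The missing idea is a \emph{self-improving bootstrap}. The paper proves two transfer lemmas: (i) a bound $N^\alpha$ on the windowed sum $\sum_{N+2-2\sqrt N\le n\le N}\gamma^nT^nx$ yields a bound $N^{\alpha+\delta_p}$ on the full sum $\sum_{n\le N}\gamma^nT^nx$, by cutting $[1,N]$ into only $\sqrt N$ intervals $[n^2+1,(n+1)^2]$ (not $N$ unit blocks) and applying the square-function inequality; (ii) conversely, the strong Kreiss condition together with the Stechkin multiplier theorem (bounded variation of the normalized Poisson weights, Lemma \ref{technical}) shows the windowed sum at scale $N$ is controlled by the full sum at scale $4\sqrt N$. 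Composing these turns an exponent $\alpha$ into $\alpha/2+\delta_p$, and iterating $K\approx\log\log N$ times drives the exponent to $2\delta_p$ while accumulating only $E_p^K=(\log N)^{O(1)}$ --- this iteration, not your Step 2, is where the logarithm is actually born. Finally, the passage from the averaged quantity $\norm{\sum_{n\le N}\gamma^nT^nx}_{L^p(\T,L^p(\mu))}$ to $\norm{T^N}$ is done by a duality step, writing $(1+2[\sqrt N])\langle x^*,T^{N+1}x\rangle$ as an integral over $\T$ of $\sum_n\gamma^nT^{*n}x^*$ against $\sum_m\bar\gamma^mT^{N+1-m}x$ and using the estimate for $T$ on $L^p$ and for $T^*$ on $L^q$; this step is absent from your outline and is needed to convert the exponent $2\delta_p$ (plus its dual counterpart, minus the gain $\sqrt N$ from the window length) into $\tau_p$. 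A minor further point: your concern about smooth versus sharp cutoffs is unnecessary, since $L^p(\mu)$ is UMD and the Riesz projection onto an arbitrary interval of frequencies is bounded, which the paper uses freely.
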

	
	\medskip
	
	\noindent {\bf Remarks.} When $p=2$ we recover the optimal result from \cite{CCEL}. The theorem does not cover the cases $p\in \{1,\infty\}$ but in these cases, the bound of Lubich and Nevanlinna is the best possible, as we show in Proposition \ref{example} below. Finally, let us notice that the case of  UMD Banach spaces has been investigated very recently by Deng, Lorist and Veraar 
	\cite{DLV}. They prove (see  \cite[Corollary 3.2]{DLV}) that any strongly Kreiss bounded operator $T$ on a UMD Banach space satisfies $\|T^N\|= \underset{N\to \infty}\OO(N^\alpha)$ for some $\alpha<1/2$.
	
	\medskip

	
	
	It follows from Proposition \ref{example} below that the exponent $\tau_p$ in \eqref{main-estimate} is the best possible. 
	The proposition is a generalization of an example of Lubich and Nevanlinna \cite{LN} who considered the case 
	$p=\infty$. 

\medskip	
	
	Here and in the
	sequel, let $B(X)$ denote the space of bounded operators on $X$.
	
	\begin{proposition}\label{example}
		There exists an operator $V$ strongly Kreiss bounded on every $\ell^p(\Z)$, $1\le p\le\infty$ such that 
		\begin{equation}\label{upper-lower}C^{-1} N^{|1/2-1/p|} \le \|V^N\|_{B(\ell^p(\Z))}\le C N^{|1/2-1/p|}\, ,
		\end{equation}
		for a constant $C\ge 1$ and all $N \in \N$.
	\end{proposition}

	
	
	While Proposition  \ref{example} proves that the exponent
	$\tau_p$ in \eqref{main-estimate} is sharp, we do 
	not know whether the extra logarithmic terms are needed in general. The example \eqref{exampleStrKreiss} proves that one cannot avoid the logarithmic terms when $p=2$. Hence it is natural to conjecture that this is also the case for $1<p<\infty$. 
	
	\medskip
	
	Another question of interest is whether one can find an optimal bound for
	$\kappa$ (and the implicit constant in the notation $\OO$). For $p=2$, it is possible to provide an explicit bound on these constants, following carefully the arguments in \cite[p. 13-15]{CCEL}, but we do not know whether the obtained $\kappa$ will be optimal. For $p\neq 2$, 
	the constant $\kappa$ depends on several Fourier multiplier norms in $L^p$ and it is
	hard to estimate it in an optimal way. On the other hand, some of these constants
	tend to $\infty$ as $p$ is getting close  to 1 (or to $\infty$). For instance, the best possible constant $R_p$ in \eqref{riesz}  tends to $\infty$ when $p$ is getting close  to 1 (or to $\infty$).
	
	\medskip

	Let us mention that, when $T$ is a \emph{positive} strongly Kreiss bounded operator on $L^p(\M)$ and $p\in [1,4/3)\cup (4,\infty]$,  it is possible to improve \eqref{main-estimate} by replacing 
	$N^{|1/2-1/p|}$ with $N^{1/\bar p}$, $\bar p=\max (p,p/(p-1))$, see Proposition \ref{positive} below.
	
	\medskip

	Finally, we describe the organization of the paper. In Section 2 we state and prove auxiliary results concerning Fourier multipliers in $L^p$. Section 3 is dedicated to the proof of Theorem \ref{main-theorem} and in Section 4 we prove Proposition 
	\ref{example}. In Section 5 we provide improved estimates on the growth rate of $\|T^n\|$ for strongly Kreiss bounded operators under extra assumptions on $T$. In 
	particular, we consider the case where $T$ is a positive operator on $L^p(\M)$, $1\le p\le \infty$, hence including the cases 
	$p\in \{1,\infty\}$.

	\medskip

	\section{Auxiliary results}
	Through the paper we will denote by $\T=[-\pi,\pi)$ the torus and  by $\lambda$ the Haar-Lebesgue measure on $\T$. We also set $e_n(t):={\rm e}^{i nt}$ for all $t\in \T$ and $n \in \Z$.

	\medskip

	Given a bounded interval $I\subset \Z$,  we define an operator $M_I$ on $L^p(\TT)$, by setting for every $f\in L^p(\TT)$,
	$$
	M_If(t):= \sum_{i\in I}c_i(f) e_i(t) \, , \quad t\in \TT
	$$
	where $\sum_{n\in \Z} c_n(f)e_n$ is the formal Fourier series of $f$.
	
	\medskip
	
	Recall the definition of the weak $L^1$-norm $\norm{\cdot}_{L^{1,\infty}(\T)}$ on $(\T,\lambda)$. For every measurable function $g$ on $\T$, we set
	$$
	\|g\|_{L^{1,\infty}(\T)}:=\sup_{x>0}x\lambda(|g|\ge x)= 
	\sup_{x>0}x \lambda(\{t \in \T\, :\, |g(t)|\ge x\})\, .
	$$

	\begin{proposition}\label{new-prop}
		Let $1<p<\infty$ and $p'=\min(2,p)$.  There exists $D_p>0$  such that for every finite collection 
		$(I_\ell)_{1\le \ell\le L}$ of mutually disjoint intervals of integers, 
		\begin{equation}\label{inequality1}
			\Big\|\Big( \sum_{\ell=1}^L |M_{I_\ell}f|^2\Big)^{1/2}\Big\|_{L^p(\TT)} 
			\le D_p L^{1/p'-1/2}\|f\|_{L^p(\TT)}\, , \quad  f\in L^p(\TT)\,.
		\end{equation}
		Furthermore,  there exists $D_{1}>0$  such that for every finite collection 
		$(I_\ell)_{1\le \ell\le L}$ of mutually disjoint intervals of integers, 
		\begin{equation}\label{inequality-L1-1}
			\Big\|\Big( \sum_{\ell=1}^L |M_{I_\ell}f|^2\Big)^{1/2}\Big\|_{L^{1,\infty}(\T)} 
			\le D_{1} L^{1/2}\|f\|_{L^1(\TT)}\, , \quad f\in L^1(\TT)\, .
		\end{equation}	
	\end{proposition}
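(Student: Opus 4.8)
The plan is to deduce both inequalities from known Rubio de Francia-type square function estimates on arbitrary intervals, using interpolation together with a trivial $L^2$ bound to extract the explicit dependence on $L$.

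First I would recall the foundational result: for an arbitrary collection of disjoint intervals $(I_\ell)$ in $\Z$, the operator $f\mapsto (\sum_\ell |M_{I_\ell}f|^2)^{1/2}$ is bounded on $L^p(\T)$ for $2\le p<\infty$ with a constant $C_p$ \emph{independent} of the number of intervals $L$; this is the Rubio de Francia inequality, with the endpoint-type refinements (weak $(1,1)$ behavior of the associated vector-valued operator) due to Kislyakov and Parilov. In particular, for $p\ge 2$ we get \eqref{inequality1} with $L^{1/p'-1/2}=L^0=1$, so there is nothing to prove in that range beyond citing the known theorem (note $p'=\min(2,p)=2$ when $p\ge 2$). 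The substance is the range $1<p<2$, where $p'=p$ and the target exponent is $L^{1/p-1/2}>0$, and the weak-type statement \eqref{inequality-L1-1} at the endpoint $p=1$.

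For $1<p<2$ the strategy is complex interpolation between two endpoints. At one end, I would use the $L^2$ bound: by orthogonality of the Fourier supports, for disjoint intervals $\|(\sum_\ell |M_{I_\ell}f|^2)^{1/2}\|_{L^2(\T)}^2 = \sum_\ell \|M_{I_\ell}f\|_{L^2(\T)}^2 \le \|f\|_{L^2(\T)}^2$, which gives constant $1$ and exponent $L^0$. At the other end I need a bound near $L^1$. Here \eqref{inequality-L1-1} is exactly what is needed: for \emph{arbitrary} (possibly overlapping) intervals the vector-valued operator maps $L^1(\T)\to L^{1,\infty}(\T)$ with constant $D_{1,\infty}L^{1/2}$ — the $L^{1/2}$ loss is unavoidable since without disjointness one can only use the triangle inequality in $\ell^2$ against $L$ copies of a single interval operator (each of weak type $(1,1)$ uniformly, being a smooth Fourier projection, e.g. a modulated conjugate-function operator). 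Interpolating the strong $L^2$ estimate against the weak $L^1$ estimate via the Marcinkiewicz interpolation theorem for the sublinear operator $f\mapsto (\sum_\ell |M_{I_\ell}f|^2)^{1/2}$, at $\theta$ with $1/p = 1-\theta/2$, i.e. $\theta = 2(1-1/p) = 2/p'$ (here $p'=p$), yields constant $\lesssim_p (L^{1/2})^{\theta}\cdot 1^{1-\theta} = L^{1/p-1/2}$, which is precisely $L^{1/p'-1/2}$. One must check the endpoints are genuinely usable for real interpolation; since the $L^2$ bound is strong type it certainly embeds as weak type $(2,2)$, so Marcinkiewicz applies directly.

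The main obstacle is establishing the weak-type $(1,1)$ bound \eqref{inequality-L1-1} with the correct $L^{1/2}$ constant and, more importantly, ensuring the \emph{uniform} (in $L$) Rubio de Francia bound for $p\ge 2$ that powers the disjoint case — this is exactly where the machinery of Kislyakov–Parilov enters, since the original Rubio de Francia argument is stated for $p>2$ and the delicate part is the behavior down near $p=2$ and the endpoint. I would either cite their result as a black box or, if a self-contained argument is wanted, reduce $M_{I_\ell}$ to a smooth Littlewood–Paley piece plus boundary terms controlled by (modulated) Hilbert transforms, for which the vector-valued Calderón–Zygmund theory of Benedek–Calderón–Panzone gives the weak $(1,1)$ bound $D_{1,\infty}L^{1/2}$ after summing $L$ single-interval contributions in $\ell^2$. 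Once \eqref{inequality-L1-1} is in hand, the interpolation step is routine and produces \eqref{inequality1} for $1<p<2$; combined with the trivial case $p\ge 2$ this completes the proof.
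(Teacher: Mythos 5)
Your proposal is correct and follows essentially the same route as the paper: cite the Rubio de Francia / Kislyakov--Parilov inequality for $p\ge 2$, invoke the weak $(1,1)$ bound with constant $O(L^{1/2})$ for arbitrary intervals (Kislyakov--Parilov, or Grafakos Exercise 4.6.1(a) via vector-valued Calder\'on--Zygmund theory), and obtain the range $1<p<2$ by Marcinkiewicz interpolation against the orthogonality-based $L^2$ estimate. Note only a small bookkeeping slip in the interpolation step: with $1/p=(1-\theta)/1+\theta/2$ the factor $L^{1/2}$ should carry the exponent $1-\theta=2/p-1$ (not $\theta$, and $\theta=2(1-1/p)\ne 2/p'$), which indeed yields the constant $L^{1/p-1/2}$ you state.
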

	

	\begin{proof} For $p\ge 2$, \eqref{inequality1} is the so-called Rubio de  Francia inequality, see 
		\cite{Rubio}. Actually, Rubio de Francia proved the result on the real line. The result for the torus (with extensions) appears in Kislyakov-Parilov \cite{KP}.

		Inequality \eqref{inequality-L1-1} appears in \cite{KP} (take $g_k=f$ in the first inequality \cite[Section 1.4 p.6419]{KP}), see also \cite[Exercise 6.4.1 (a), p.337]{Grafakos2} for a version on the real line.

		Then \eqref{inequality1} for $1<p<2$ follows by applying the  Marcinkiewicz interpolation theorem, see \cite[Theorem 1.3.2]{Grafakos2}, to the sub-linear operator $f\mapsto \Big( \sum_{\ell=1}^L |M_{I_\ell}f|^2\Big)^{1/2}$ with $p_0=1$ and $p_1=2$. 
	\end{proof}
	
	\begin{corollary}\label{corollary}
		Let $1< p<\infty$. Let $p''=\max(2,p)$ and recall that $p' = \min(2,p)$. For every finite collection 
		$(I_\ell)_{1\le \ell\le L}$ of disjoint and consecutive  intervals of integers  and every $f\in L^p(\TT)$, setting $I:=\cup_{\ell =1}^LI_\ell $, we have
				\begin{align}\label{inequality3}
			\|M_I f\|_{L^p(\TT)} &  
			\le D_q  L^{1/2-1/p''}\Big\|\Big( \sum_{\ell=1}^L |M_{I_\ell}f|^2\Big)^{1/2}\Big\|_{L^p(\TT)} \nonumber \\
			& \le D_q  L^{1/2-1/p''}\Big( 
			\sum_{\ell=1}^L \|M_{I_\ell}f\|_{L^p(\TT)}^{p'}\Big)^{1/p'},
		\end{align} 
		where $q = p/(p-1)$ and $D_{q}$ is defined in Proposition \ref{new-prop}.
	\end{corollary}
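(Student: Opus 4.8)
The plan is to derive both inequalities essentially by duality from Proposition \ref{new-prop}, exploiting the fact that the intervals are not merely disjoint but consecutive, so that $M_I$ can be recovered from the pieces $M_{I_\ell}$ in a controlled way. For the \emph{second} inequality, the observation is that $\big\|(\sum_\ell |M_{I_\ell}f|^2)^{1/2}\big\|_{L^p}^{p'} \le \big(\sum_\ell \|M_{I_\ell}f\|_{L^p}^{p'}\big)$ when $p' = \min(2,p) \le p$: indeed for $p\ge 2$ this is just Minkowski's inequality in $L^{p/2}$ applied to $\sum_\ell |M_{I_\ell}f|^2$, and for $1<p<2$ one has $p'=p$ and it is the triangle-type inequality $\|(\sum |g_\ell|^2)^{1/2}\|_{L^p}^p \le \sum\|g_\ell\|_{L^p}^p$ valid for $p\le 2$. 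So the second line follows immediately once the first is established.

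For the \emph{first} inequality, note that $p'' = \max(2,p)$ and $(p'')' = \min(2,p') $ in the conjugate sense; more precisely the exponent conjugate to $p''$ is $q := \min(2, p/(p-1))$, and one checks $1/2 - 1/p'' = 1/q' - 1/2$ where $q' = (p'')'$ plays the role of "$p'$" in \eqref{inequality1} for the dual exponent. The strategy is: write $\|M_I f\|_{L^p} = \sup\{ |\langle M_I f, g\rangle| : \|g\|_{L^{p^*}}\le 1\}$ with $p^* = p/(p-1)$. Since the $I_\ell$ are disjoint and their union is $I$, and $M_I$ projects onto frequencies in $I$, we have $\langle M_I f, g\rangle = \langle f, M_I g\rangle = \sum_{\ell=1}^L \langle M_{I_\ell} f, M_{I_\ell} g\rangle$ (the cross terms vanish because the $I_\ell$ are pairwise disjoint sets of frequencies). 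By Cauchy--Schwarz in $\ell$ and then Hölder in the torus variable,
\[
\Big|\sum_{\ell=1}^L \langle M_{I_\ell}f, M_{I_\ell}g\rangle\Big| \le \int_\T \Big(\sum_\ell |M_{I_\ell}f|^2\Big)^{1/2}\Big(\sum_\ell |M_{I_\ell}g|^2\Big)^{1/2} \le \Big\|\Big(\sum_\ell |M_{I_\ell}f|^2\Big)^{1/2}\Big\|_{L^p} \Big\|\Big(\sum_\ell |M_{I_\ell}g|^2\Big)^{1/2}\Big\|_{L^{p^*}}.
\]
Now apply \eqref{inequality1} to the second factor with exponent $p^*$: since $\min(2,p^*) = q'$ and $1/q' - 1/2 = 1/2 - 1/p''$, this gives $\big\|(\sum_\ell |M_{I_\ell}g|^2)^{1/2}\big\|_{L^{p^*}} \le D_{p^*} L^{1/2 - 1/p''}\|g\|_{L^{p^*}} \le D_{p^*} L^{1/2-1/p''}$. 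Taking the supremum over $g$ yields the first inequality with $C_p = D_{p^*}$.

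The main thing to get right — and the only real subtlety — is the bookkeeping of exponents: verifying that the power of $L$ coming from \eqref{inequality1} applied at the conjugate exponent $p^*$ is exactly $L^{1/2 - 1/p''}$, i.e. that $1/\min(2,p^*) - 1/2 = 1/2 - 1/\max(2,p)$, which one checks by splitting into the cases $p\ge 2$ (then $p^*\le 2$, $\min(2,p^*)=p^*$, $\max(2,p)=p$, and $1/p^* - 1/2 = 1 - 1/p - 1/2 = 1/2 - 1/p$) and $p<2$ (then $p^*>2$, $\min(2,p^*)=2$ giving exponent $0$, and $\max(2,p)=2$ also giving $1/2-1/2=0$). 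Everything else — the vanishing of cross terms, Cauchy--Schwarz, Hölder, and the passage to the second inequality via Minkowski — is routine. Note we do not need the weak-type estimate \eqref{inequality-L1-1} here, since $1<p<\infty$; it is recorded in Proposition \ref{new-prop} for later use.
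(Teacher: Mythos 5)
Your proof is correct and follows essentially the same route as the paper: duality against $g\in L^{p/(p-1)}$, orthogonality of the disjoint frequency blocks to reduce to $\sum_\ell \langle M_{I_\ell}f, M_{I_\ell}g\rangle$, pointwise Cauchy--Schwarz plus H\"older, and then Proposition \ref{new-prop} applied at the conjugate exponent, with the same exponent bookkeeping $1/q'-1/2 = 1/2-1/p''$. The passage to the second inequality via subadditivity of $x\mapsto x^{p/2}$ for $p\le 2$ and Minkowski in $L^{p/2}$ for $p\ge 2$ is also exactly the paper's argument.
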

	
	\noindent {\bf Proof.} Clearly, it is enough to assume that $f$ is a trigonometric polynomial supported in $I$. 
	Let $g \in L^q(\TT)$, where $q=p/(p-1)$. Note that $q'=\min(p/(p-1),2)=p''/(p''-1)$ and that $1/q'-1/2=1/2-1/p''$. Using orthogonality and the
	Cauchy-Schwarz and H\"older inequalities, we see that
	\begin{align*}
		\Big|\int_\T f\bar g \, d\lambda\Big| &=\Big|\int_\T\sum_{\ell=1}^L M_{I_\ell}f M_{-I_\ell}\bar g\, d\lambda\Big|\\
		&\le \Big\|\Big(\sum_{\ell=1}^{L}|M_{I_\ell}f|^2\Big)^{1/2}\Big\|_{L^p(\TT)}
		\Big\|\Big(\sum_{\ell=1}^{L}|M_{-I_\ell}\bar g|^2 \Big)^{1/2}\Big\|_{L^q(\TT)}\\
		&\le D_{q}  L^{1/2-1/p''}\Big\|\Big(\sum_{\ell=1}^{L}|M_{I_\ell}f|^2\Big)^{1/2}\Big\|_{L^p(\TT)}\|\bar g\|_{L^q(\TT)} \, ,
	\end{align*}
	where we used Proposition \ref{new-prop}. 
	Then \eqref{inequality3} follows by taking the supremum over $g\in L^q(\TT)$, with $\|g\|_{L^q(\TT)}=1$. 
	The last estimate follows by using that $x\mapsto x^{p/2}$ is subadditive on $\R_+$ when $p\le 2$ and Minkowski's inequality in 
	$L^{p/2}(\TT)$ when $p> 2$. \hfill $\square$
	
	\medskip

	Corollary \ref{corollary} implies the following statement. For the remainder of this paper, we will use, without explicit mention, the isomorphism between $L^p(\M\times{\mathcal N})$ and $L^p(\M,L^p({\mathcal N}))$, for measure spaces $(\M,\mu)$ and $({\mathcal N},\nu)$.


	\begin{corollary}\label{lemma}
		Let $1 < p < \infty$. There exists $C_p >0$ such that for every $N\in \N $ and each $(x_1, \ldots, x_{N^2})\in 
		(L^p(\M))^{N^2}$,	
		\begin{align}\label{lemsum}
			&\Big\| \sum_{k=1}^{N^2}  e_kx_k\Big\|_{L^p(\T,L^p(\M))}\, 
			\\  & \quad \leq C_p  N^{1/2-1/p''}\left(\sum_{n=0}^{N-1}\norm{\sum_{k=n^2+1}^{(n+1)^2}e_kx_k}^{p'}_{L^p(\T,L^p(\M)) }\right)^{1/p'}.\nonumber
		\end{align}
		
	\end{corollary}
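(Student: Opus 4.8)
The statement is the natural $L^p(\mu)$-valued version of Corollary \ref{corollary}, and the plan is to deduce it by Fubini's theorem, applying the scalar inequality \eqref{inequality3} on each fibre. Since $\lambda$ and $\mu$ are $\sigma$-finite, we have the isometric identification $L^p(\T,L^p(\mu))=L^p(\T\times\mu)$; thus a function $F\in L^p(\T,L^p(\mu))$ may be regarded as $F(\gamma,\omega)$, with $\|F\|_{L^p(\T,L^p(\mu))}^p=\int\|F(\cdot,\omega)\|_{L^p(\T)}^p\,d\mu(\omega)$, and likewise for $M_IF$, the operator $M_I$ acting in the variable $\gamma$.

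Concretely, I would fix $N\in\N$ and $x_1,\dots,x_{N^2}\in L^p(\mu)$, set $I_n:=\{n^2+1,\dots,(n+1)^2\}$ for $0\le n\le N-1$, and observe that the $I_n$ are disjoint consecutive intervals of integers with $\bigcup_{n=0}^{N-1}I_n=\{1,\dots,N^2\}=:I$; hence Corollary \ref{corollary} applies with $L=N$ (the enclosing set, here $[-N^2,N^2]$, is irrelevant, as $N$ does not enter \eqref{inequality3}). Put $F(\gamma):=\sum_{k=1}^{N^2}\gamma^kx_k$. For $\mu$-a.e.\ $\omega$, the function $\gamma\mapsto F(\gamma,\omega)$ is a trigonometric polynomial with Fourier coefficients supported in $I$, so $M_IF(\cdot,\omega)=F(\cdot,\omega)$ while $M_{I_n}F(\cdot,\omega)=\sum_{k=n^2+1}^{(n+1)^2}\gamma^kx_k(\omega)$. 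Applying \eqref{inequality3} to $f=F(\cdot,\omega)$ and raising to the power $p$ gives, for a.e.\ $\omega$,
\[
\|F(\cdot,\omega)\|_{L^p(\T)}^p\ \le\ C_p^p\,N^{p(1/2-1/p'')}\Big(\sum_{n=0}^{N-1}\|M_{I_n}F(\cdot,\omega)\|_{L^p(\T)}^{p'}\Big)^{p/p'}.
\]
Integrating in $\mu$ and using Fubini on the left-hand side, the proof is reduced to
\[
\int\Big(\sum_{n=0}^{N-1}\|M_{I_n}F(\cdot,\omega)\|_{L^p(\T)}^{p'}\Big)^{p/p'}d\mu(\omega)\ \le\ \Big(\sum_{n=0}^{N-1}\|M_{I_n}F\|_{L^p(\T,L^p(\mu))}^{p'}\Big)^{p/p'},
\]
which, since $p'=\min(2,p)\le p$, is exactly Minkowski's inequality in $L^{p/p'}(\mu)$ applied to the nonnegative functions $\omega\mapsto\|M_{I_n}F(\cdot,\omega)\|_{L^p(\T)}^{p'}$, combined once more with Fubini to rewrite $\big(\int\|M_{I_n}F(\cdot,\omega)\|_{L^p(\T)}^p\,d\mu(\omega)\big)^{1/p}$ as $\|M_{I_n}F\|_{L^p(\T,L^p(\mu))}$. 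Inserting the second display into the first and taking $p$-th roots yields \eqref{lemsum}.

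This argument has no genuine obstacle; the only two points deserving a word are the measurability in $\omega$ of the scalar quantities $\|M_{I_n}F(\cdot,\omega)\|_{L^p(\T)}$, which is immediate since the $x_k$ are measurable and the $L^p(\T)$-norm of $\sum_{k}\gamma^kx_k(\omega)$ depends continuously on the finitely many scalars $x_k(\omega)$, and the direction of Minkowski's inequality, which is the sole place where the relation $p'\le p$ is used --- precisely the reason $p'=\min(2,p)$ is chosen in Proposition \ref{new-prop}. In short, Corollary \ref{lemma} is a Fubini-and-Minkowski repackaging of the scalar Corollary \ref{corollary}.
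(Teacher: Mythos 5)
Your proof is correct and follows essentially the same route as the paper: apply the scalar Corollary \ref{corollary} fibrewise to the intervals $I_n=\{n^2+1,\dots,(n+1)^2\}$, then integrate in $\mu$ using Fubini, with Minkowski's inequality in $L^{p/p'}(\mu)$ (i.e.\ in $L^{p/2}(\mu)$ when $p>2$, trivially in $L^1(\mu)$ when $p\le 2$) to interchange the sum and the integral. Your write-up simply makes explicit the details that the paper's two-line proof leaves implicit.
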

	
	\begin{proof}
		Let $N \in \N$ and $I_n := [n^2+1, (n+1)^2)$ for $n \in \{0,\cdots, N-1\}$. By Corollary \ref{corollary}, for all $(a_1, \ldots, a_{N^2})\in \C^{N^2}$ and $f \in L^p(\TT)$, 
		\begin{equation}\label{ineq}	\norm{\sum_{k=1}^{N^2} a_ke_k}_{L^p(\TT)}   \le D_q N^{1/2-1/p''}\left( 
			\sum_{n=0}^{N-1} \norm{\sum_{k=n^2+1}^{(n+1)^2} a_ke_k}_{L^p(\TT)}^{p'}\right)^{1/p'}\, .
		\end{equation}
		
		Assume that $p>2$ and apply \eqref{ineq} with $a_k=x_k(\omega)$, $\omega\in \M$. Then, using Fubini's theorem, 
		we obtain that
		\begin{gather*}
			\norm{ \sum_{k=1}^{N^2}  e_kx_k
			}_{L^p(\T,L^p(\M))}^p =\int_\M \Big( \int_\T\Big|
			\sum_{k=1}^{N^2} x_k(\omega) e_k \Big|^p \, 
			d\lambda\, \Big)d\mu(\omega)\\
			\le \big(D_q N^{1/2-1/p''}\big)^p \bigintsss_\M \Bigg( \,
			\sum_{n=0}^{N-1} \norm{\sum_{k=n^2+1}^{(n+1)^2} 
				x_k(\omega)e_k }_{L^p(\TT)}^{2}\, \Bigg)^{p/2}
			\, d\mu(\omega)\, .
		\end{gather*}
		We conclude by using Minkowski's inequality in $L^{p/2}(\M)$ and Fubini's theorem again. 
		
		The case where $1\le p \le 2$, may be handled similarly, using Fubini's theorem only once.  
		
		Hence we get the result with $C_p = D_q$.
		
	\end{proof}

	We conclude this section by an auxiliary statement which will be used frequently throughout the paper. In order to interpret condition \eqref{technical2}, we first recall the notion of a sequence of bounded variation. 
	
	\begin{definition}
		We say that a sequence of complex numbers ${\bf a}=(a_n)_{n\in \Z}$ is of bounded variation if $v({\bf a}):= \sum_{n\in \Z} |a_{n+1}-a_{n}|<\infty$. Then, we call $v({\bf a})$, the variation of ${\bf a}$.
	\end{definition}
	In the remainder of the paper, $\sum\limits_{\alpha\le n\le \beta}$ for $\alpha,\beta \in \R$ will denote the sum over integers $n \in \Z$ such that $\alpha\le n \le \beta$.
	\begin{lemma}\label{technical}
		There exists $C_{exp}>1$ such that for every $N\in \N$ and every 
		integer $ K\in [N-6\sqrt{N}, N+1]$, 
		\begin{equation}
			\label{technical1}\frac{{\rm e}^N}{C_{exp} \sqrt N}\le \frac{N^{K}}{K!}
			\le \frac{C_{exp}{\rm e}^N}{ \sqrt N}\, ,
		\end{equation}
		and	
		\begin{equation}	\label{technical2}
			\nsum_{N-2\sqrt{N}+2\le n\le N }\Bigg|
			\Big(\bsum_{n-4\sqrt N\le k\le n} \frac{N^{k}}{k!}\Big)^{-1}-
			\Big(\bsum_{n+1-4\sqrt N\le k\le n+1}\frac{N^k}{k!}\Big)^{-1}\Bigg|\le \frac{C_{exp}}{{\rm e}^{N}}\, . 
		\end{equation}
		In particular, the sequences 
		$$
		\Bigg\{{\rm e}^N\Big(\bsum_{n-4\sqrt N\le k\le n} \frac{N^{k}}{k!}\Big)^{-1},\; N-2\sqrt{N}+2 \le n\le N  \Bigg\}
		$$ are of
		 uniformly bounded variation.
	\end{lemma}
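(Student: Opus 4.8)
The plan is to prove \eqref{technical1} via Stirling's formula and then derive \eqref{technical2} by estimating the increments of the partial sums of the Poisson-type weights. For \eqref{technical1}, write $m = N+K$, so that $m$ ranges over $[N+2-2\sqrt N, N]$, i.e. $m = N - O(\sqrt N)$. Applying Stirling in the form $m! = \sqrt{2\pi m}\, (m/\e)^m (1+O(1/m))$, one gets
$$
\frac{N^{m}}{m!} = \frac{1}{\sqrt{2\pi m}}\Big(\frac{N}{m}\Big)^m \e^m (1+o(1)).
$$
Writing $N/m = 1 + (N-m)/m$ with $0 \le N-m \le 2\sqrt N - 2$ and $m \sim N$, we have $\log(N/m) = (N-m)/m - \tfrac12 (N-m)^2/m^2 + O((N-m)^3/m^3)$, so
$$
m\log\frac{N}{m} = (N-m) - \frac{(N-m)^2}{2m} + O\Big(\frac{(N-m)^3}{m^2}\Big) = (N-m) - O(1),
$$
since $(N-m)^2/m = O(1)$ and $(N-m)^3/m^2 = O(N^{-1/2}) = o(1)$. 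Hence $(N/m)^m \e^m = \e^{N} \cdot \e^{-O(1)}$, and combined with $\sqrt{2\pi m} \asymp \sqrt N$ this gives \eqref{technical1} with a uniform constant $C$. The key point is that the quadratic Taylor term is only $O(1)$ — not $o(1)$ — which is exactly why one gets two-sided bounds with a constant rather than an asymptotic equivalence; this is the one place a little care is needed.

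For \eqref{technical2}, fix $n$ in the relevant range and set $S_n := \sum_{n-\sqrt N \le k \le n} N^k/k!$, so we must bound $\sum_n |S_n^{-1} - S_{n+1}^{-1}| = \sum_n \frac{|S_{n+1}-S_n|}{S_n S_{n+1}}$. First, $S_n \ge N^n/n!$, and by \eqref{technical1} (which applies to every term $N^k/k!$ with $n - \sqrt N \le k \le n$, since all such $k$ lie in $[N+2-2\sqrt N, N]$) every term is $\asymp \e^N/\sqrt N$; there are $\asymp \sqrt N$ terms, so $S_n \asymp \e^N$, uniformly in $n$ and $N$. Thus $S_n S_{n+1} \asymp \e^{2N}$. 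Next, $S_{n+1} - S_n = \frac{N^{n+1}}{(n+1)!} - \frac{N^{\lceil n+1-\sqrt N\rceil}}{\lceil n+1-\sqrt N\rceil!}$ (the top of the window moves up by one, the bottom moves up by roughly one), so by \eqref{technical1} each of these two terms is $\asymp \e^N/\sqrt N$, whence $|S_{n+1}-S_n| = O(\e^N/\sqrt N)$. Therefore each summand is $O\big(\frac{\e^N/\sqrt N}{\e^{2N}}\big) = O(\e^{-N}/\sqrt N)$, and summing over the $O(\sqrt N)$ values of $n$ yields the bound $O(\e^{-N})$, which is \eqref{technical2}.

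Finally, the "in particular" clause is immediate: the sequence $a_n^{(N)} := \big(\e^N S_n\big)^{-1}$ has total variation $\sum_n |a_{n+1}^{(N)} - a_n^{(N)}| = \e^{-N}\sum_n |S_n^{-1} - S_{n+1}^{-1}| \le C \e^{-N}\cdot \e^{-N}$... — more precisely, $\e^{-N}$ times the left side of \eqref{technical2}, hence bounded by $C$ uniformly in $N$ by \eqref{technical2}. I expect the main (mild) obstacle to be bookkeeping the floor/ceiling in the summation ranges and checking that all indices $k$ appearing stay inside $[N+2-2\sqrt N, N]$ so that \eqref{technical1} is legitimately applicable to each term; once that is done, everything reduces to the two-sided bounds already established.
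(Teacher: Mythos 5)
Your proof is correct and follows essentially the same route as the paper: a two-sided Stirling estimate giving $N^{m}/m!\asymp \mathrm{e}^N/\sqrt N$ for $|N-m|=O(\sqrt N)$ (the paper cites Lemma 3.4 of \cite{CCEL} for the upper bound instead of redoing Stirling), and then the bound $|S_n^{-1}-S_{n+1}^{-1}|\le (\text{two boundary terms})/(S_nS_{n+1})=O(\mathrm{e}^{-N}/\sqrt N)$ summed over the $O(\sqrt N)$ values of $n$, exactly as in the paper's displayed computation. One small correction: your parenthetical claim that every index $k\in[n-\sqrt N,\,n]$ lies in $[N+2-2\sqrt N,\,N]$ is false --- for $n$ near the bottom of its range $k$ descends to about $N+2-3\sqrt N$, and for $n=N$ the window for $S_{n+1}$ reaches $k=N+1$ --- so \eqref{technical1} as stated does not literally cover all the terms you invoke it for. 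This is harmless, since your own Stirling computation only uses $|N-k|=O(\sqrt N)$ and therefore yields the same two-sided bound $N^k/k!\asymp \mathrm{e}^N/\sqrt N$ uniformly for $k\in[N-3\sqrt N,\,N+1]$ (with a larger constant); you should just state the estimate on that wider range rather than asserting the inclusion.
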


	\noindent {\bf Proof.} The upper bound of \eqref{technical1} 
	follows from  \cite[Lemma 3.4]{CCEL} and the lower bound may be proved similarly. Then, \eqref{technical2} follows from the fact that, for $ N-2\sqrt{N}+2 \le n\le N$, writing $m:=\lfloor n-4\sqrt N\rfloor$+1,  where $\lfloor\cdot\rfloor$ denotes the integer part,  we have
	\begin{align*}
		\Bigg|\Big(\bsum_{n-4\sqrt N\le k \le n}\frac{N^{k}}{k!}\Big)^{-1}-\Big(\bsum_{n+1-4\sqrt N\le k \le n+1}\frac{N^{k}}{k!}\Big)^{-1}\Bigg| & \le \displaystyle\frac{\frac{N^{m}}{
				m!}+\frac{N^{n+1}}{(n+1)!}}{\Big(\displaystyle\bsum_{n+1-4\sqrt N\le k \le n}\frac{N^{k}}{k!}\Big)^2} \\
		&  \le \frac{\tilde C e^{-N}}{\sqrt N}\, ,
	\end{align*}
	for a constant $\tilde C>0$, where we used \eqref{technical1}.
	\hfill $\square$

	\medskip

	\section{Proof of Theorem \ref{main-theorem}}
	The proof of Theorem \ref{main-theorem} makes use of Fourier multipliers, 
	see  \cite[p. 11-12]{Cuny} for a brief description of Fourier multipliers in UMD Banach spaces (in particular in $L^p$ spaces). Actually, we only make use of
	\emph{real-valued} Fourier multipliers in our proofs and then use Fubini's theorem to obtain results for $L^p$-valued 
	Fourier multipliers.
	
	\medskip
	
	We shall rely on the following well-known results on $L^p$-multipliers used in a similar context in \cite{Cuny}.
	\begin{lemma}\label{RieszStechkin}
		Let $X$ be a UMD space and $1<p<\infty$. 
		\begin{enumerate}[label=(\roman*)]
			\item \textit{Riesz's theorem} :  there exists $R_p>0$ such that for every interval $I \subset \Z$ and every finitely supported sequence $(c_i)_{i\in \Z}\subset  X$ we have
			\begin{equation}\label{riesz}
				\norm{\sum_{i\in I}e_ic_i}_{L^p(\T,X)} \le R_p\norm{\sum_{i\in \Z}e_ic_i}_{L^p(\T,X)}. 
			\end{equation} 
			\item \textit{Stechkin's theorem} : there exists $S_p>0$ such that for every sequence of complex numbers $(a_i)_{i\in \Z}$   with bounded variation and every finitely supported sequence $(c_i)_{i\in \Z}\subset X$,
			$$
			\norm{\sum_{i\in \Z}a_ie_ic_i}_{L^p(\T,X)} \le S_p\Big(\|(a_i)_{i\in \Z}\|_{\ell^\infty(\Z)}+v((a_i)_{i\in \Z})\Big)\norm{\sum_{i\in \Z}e_ic_i}_{L^p(\T,X)}. 
			$$
		\end{enumerate}
	\end{lemma}

To prove our main result, we need two key lemmas. The first one shows that if one controls the $L^p(\TT,L^p(\M))$-norms of sums of the form \\ $\sum\limits_{N + 2 - 2\sqrt{N} \leq n \leq N} e_n T^n x$ with a growth rate of $N^{\alpha}$, $\alpha >0$, then it is possible to deduce a bound for the $L^p(\TT,L^p(\M))$-norms of $\sum\limits_{1 \leq n \leq N} e_n T^n x$ with a slightly larger growth rate of $N^{\alpha + \delta_p}$. The second one, conversely, provides a bound on the $L^p(\TT,L^p(\M))$-norms of sums  $\sum\limits_{N + 2 - 2\sqrt{N} \leq n \leq N} e_n T^n x$ with a growth rate of $N^{\beta/2}$, assuming that the $L^p(\TT,L^p(\M))$-norms of $\sum\limits_{1 \leq n \leq N} e_n T^n x$ are controlled with a growth rate of $N^\beta$, $\beta >0$, and that $T$ is strongly Kreiss bounded on $L^p(\M)$.

	We now state and prove the first lemma. Recall that $p' = \min(p,2)$ and $p''=\max(p,2)$.
	
	\begin{lemma}\label{lemind1}
		Let $T$ be a bounded operator on $L^p(\M)$, $1< p<\infty$.
		Assume that there exist $D>0$ and $ 0<\alpha\le 1$,  such that for every $N\in \N$ and every $x\in L^p(\M)$, 
		\begin{equation}\label{assumtionlemind1}
			\norm{\sum_{N + 2 - 2\sqrt{N}\le n\le N} e_n T^n x}_{L^p(\T,L^p(\M))} \le D N^\alpha  \norm{x}_{L^p(\M)}\, .
		\end{equation}
		Then, setting $\delta_p:= \frac12\left(\frac2{p'}-\frac1p\right)$, there exists $E_p^{(1)}>0$ such that for every $N\in \N$ and every $x\in L^p(\M)$, we have
		$$
		\norm{\sum_{1\le n \le N}e_n T^n x}_{L^p(\T,L^p(\M))}\le DE_p^{(1)} N^{\alpha +\delta_{p}}\norm{x}_{L^p(\M)}
		\, .
		$$
		
	\end{lemma}
	
	\noindent {\bf Proof.}  By Corollary \ref{lemma} and the assumption, for every $M\in \N$ and every $x\in L^p(\M)$, 
	\begin{align*}
		\Bigg\|&\sum_{1\le n \le M^2}e_n T^n x\Bigg\|_{L^p(\T,L^p(\M))}^{p'} \\
		&\le\big(C_p  M^{1/2-1/p''}\big)^{p'}\left( \nsum_{n=0}^{M-1}\norm{\sum_{k=n^2+1}^{(n+1)^2}e_nT^nx}_{L^p(\T,L^p(\M))}^{p'} \right)  \\
		& \le (C_pD)^{p'}M^{p'(1/2-1/p'')}\sum_{n=0}^{M-1}(n+1)^{2p'\alpha}\norm{x}_{L^p(\M)}\\
		&  \le (C_pD)^{p'} M^{p'(1/2-1/p'')+2p'\alpha+1}\norm{x}_{L^p(\M)}\,.
	\end{align*}
	Let $N\in \N$ and $M:= \lfloor\sqrt N\rfloor+1$. By the Riesz theorem (Lemma \ref{RieszStechkin} $(i)$), using that 
	$N\le M^2\le 4N$, we infer that
	\begin{align*}
		\norm{\sum_{1\le n \le N}e_n T^n x}_{L^p(\T,L^p(\M))}&\le  R_p\norm{\sum_{1\le n \le M^2}e_n T^n x}_{L^p(\T,L^p(\M))} \\
		&\le  R_p C_pD(4N)^{(1/2 -1/p'')/2 + \alpha+1/(2p')}\norm{x}_{L^p(\M)} \, ,
	\end{align*}
	and the result follows since
	$$
	(1/2 -1/p'')/2 + \alpha+1/(2p') = \alpha + (1/2+1/p'-1/p'')/2 = \alpha + \delta_p.
	$$
	
	\hfill $\square$

	\medskip
We now state and prove our second key lemma in a general setting where we assume that the $L^p(\TT,L^p(\M))$-norms of  $\sum_{ n=1}^ N e_n T^n x$ are bounded by a growth rate of $f(N)$ where  $f : \R_+ \rightarrow (1,\infty)$ is a given function.

	\begin{lemma}\label{lemind2}
		Let $T$ be a strongly Kreiss bounded operator on $L^p(\M)$, $1<p <\infty$. Assume that there exist a function $f : \R_+ \rightarrow (1,\infty)$ and $D>0$ such that for every $N\in \N$ and every $x\in L^p(\M)$, 
		\begin{equation}\label{assumption}
			\norm{\sum_{ 1 \le n \le N } e_n T^n x }_{L^p(\T,L^p(\M))} \le D f(N)  \|x\|_{L^p(\M)}\, .
		\end{equation}
		Then, there exists $E_p^{(2)}$ such that for every $N\in \N$ and every $x\in L^p(\M)$,
		$$
		\norm{\sum_{N-2\sqrt{N}+2\le n \le N}e_n T^n x}_{L^p(\T,L^p(\M))}\le DE_p^{(2)} f(4\sqrt{N}) \|x\|_{L^p(\M)} \, .
		$$
	\end{lemma}
	
	\begin{proof} The proof is similar to the one of \cite[Lemma 4.7]{CCEL}.
		
		Let $x \in L^{p}(\M)$.	Firstly, since $T$ is strongly Kreiss bounded, using assumption \eqref{assumption}, we have, for all $N\in \N$,
		
		\begin{align}\label{exp1}
			\norm{ {\rm e}^{e_1 N T}\sum_{n = 1}^{4\sqrt{N}} e_nT^nx}_{L^p(\T,L^p(\M))}  &\le C_{SK}{\rm e}^N\norm{ \sum_{n = 1}^{4\sqrt{N}} e_nT^nx}_{L^p(\T,L^p(\M))}   \\
			&\le C_{SK}D{\rm e}^Nf(4\sqrt{N})\|x\|_{L^p(\M)} \nonumber.
		\end{align}
        Recall that $C_{SK}$ denotes the smallest constant $L>0$ such that \eqref{strong-kreiss} holds.
		
		Furthermore, for all $N\in \N$,
		
		\begin{equation*}
			e^{e_1 NT}\sum_{n = 1}^{4\sqrt{N}} e_nT^nx = \sum_{1\le n \le 4\sqrt{N}}e_n{T}^nx\sum_{0\le k \le n}\frac{N^k}{k!} + \sum_{ n \ge 4\sqrt{N}}e_n{T}^nx\sum_{n-4\sqrt{N}\le k \le n}\frac{N^k}{k!}.
		\end{equation*}
		Using the Riesz theorem (Lemma \ref{RieszStechkin} $(i)$), we have 
		\begin{align}\label{exp1bis}
			&R_p\norm{ e^{e_1 N T}\sum_{n = 1}^{\sqrt{N}} e_nT^nx}_{L^p(\T,L^p(\M))} \\
			& \qquad \ge  
			\norm{ \sum_{ N-2\sqrt{N}+2 \le n \le N }e_n{T}^nx\sum_{n-4\sqrt{N}\le k \le n}\frac{N^k}{k!}}_{L^p(\T,L^p(\M))} .\nonumber
		\end{align}
		Let \[
		a_n^N := \left( \sum_{n-4\sqrt{N} \leq k \leq n} N^k / (k!) \right)^{-1}, \quad N-2\sqrt{N}+2  \leq n \leq N,
		\]
		and define $\mathbf{a} := (a_n)_{n\in \Z}$ by 
		\[
		a_n := \begin{cases}
			a_n^N, &  \text{if } N-2\sqrt{N}+2 \leq n \leq N , \\
			0, &  \text{otherwise}.
		\end{cases}
		\]  By Lemma \ref{technical},
		$$
		\norm{\mathbf{a}}_{\ell^{\infty}(\Z)} \le  \Big(\sum_{N-4\sqrt{N}\le k \le N-2\sqrt{N}+2} \frac{e^N}{C_{exp}\sqrt{N}}\Big)^{-1} \le \frac{C_{exp}}{e^N}
		$$
		and
		$$
		v(\mathbf{a}) \le \frac{C_{exp}}{e^N}.
		$$
		Then, by Stechkin's theorem (Lemma \ref{RieszStechkin} $(ii)$), 
		\begin{align}\label{exp2}
			&2C_{exp}S_p \norm{ \sum_{ N -2 \sqrt{N}+2 \le n \le N }e_n{T}^nx\sum_{n-\sqrt{N}\le k \le n}\frac{N^k}{k!}}_{L^p(\T,L^p(\M))}  \\
			&\qquad \ge e^{N} \norm{ \sum_{ N -2 \sqrt{N}+2 \le n \le N }e_n{T}^nx}_{L^p(\T,L^p(\M))}, \nonumber
		\end{align}
		with $C_{exp}$ defined as in Lemma \ref{technical}. Combining \eqref{exp1}, \eqref{exp1bis} and \eqref{exp2}, we get the desired result.
	\end{proof}

	\medskip
    	
	From \cite[Theorem]{GZ} and the proof of  \cite[Theorem 3.1]{MSZ} we obtain the following lemma.
	\begin{lemma}\label{lemind3}
		Let $T$ be a strongly Kreiss bounded operator on a Banach space $X$. Then there exists $D_{SK}>0$ depending only on $C_{SK}$, such that
		$$
		\norm{\sum_{1\le n\le N} \lambda^n T^n}_X \le D_{SK} N, \quad \lambda \in \C, \quad|\lambda|= 1.
		$$
		
	\end{lemma}

	\medskip
    This statement enables us to establish the following estimate for a strongly Kreiss bounded operator on $L^p(\M)$: for every $N\in \N$ and every $x\in L^p(\M)$
    \begin{equation}\label{estisumTnLp}    
    \norm{\sum_{1\le n \le N} e_nT^n}_{L^p(\T,L^p(\M))} \le  D_{SK}N
    \end{equation}
    where $D_{SK}$ is given in the preceding lemma.  Subsequently, Lemmas \ref{lemind1} and \ref{lemind2} can be applied inductively to yield an improvement of the estimate \eqref{estisumTnLp}. This improvement is detailed in the next result.

	\medskip

	\begin{corollary}\label{final-corollary}
		Let $T$ be a strongly Kreiss bounded operator on $L^p(\M)$, $1<p <\infty$. Then, there exists $E_p>0$ such that for all integers $N\ge 1$, $K \ge 0$ and all $x\in L^p(\M)$, 
		\begin{equation}\label{after-induction}
			\norm{\sum_{1\le n\le N} e_n T^nx}_{L^p(\T,L^p(\M))} \le  D_{SK}E_p^{K+1}N^{2\delta_p +(1/2-\delta_p)2^{-K}} \|x\|_{L^p(\M)} \, .
		\end{equation}
		
	\end{corollary}
	
	\begin{proof}
		We proceed by induction on $K$. By \eqref{estisumTnLp}, $T$ satisfies \eqref{assumption} with $D = D_{SK}$ and $f(N) = N$, hence using Lemma \ref{lemind2} we infer that for all $N\in \N$ and $x\in L^p(\M)$,  
		\begin{align*}
			\norm{\sum_{ N-2\sqrt{N}+2\le n \le N}e_n T^n x}_{L^p(\T,L^p(\M))} &\le  D_{SK}E_p^{(2)} f(4\sqrt{N})\|x\|_{L^p(\M)} \\
			&= 4 D_{SK}E^{(2)}_pN^{1/2}  \|x\|_{L^p(\M)} \, .
		\end{align*}
		It follows that $T$ satisfies \eqref{assumtionlemind1} with $D = D_{SK}E^{(2)}_p$ and $\alpha = 1/2$. By Lemma \ref{lemind1}, for every $N\in \N$ and every $x\in L^p(\M)$,  
		\begin{align*}
			\norm{\sum_{1\le n\le N} e_n T^nx}_{L^p(\M)} &\le 4D_{SK}E_p^{(1)}E_p^{(2)}N^{1/2+\delta_p}\|x\|_{L^p(\M)} \\
			&= D_{SK}E_pN^{1/2+\delta_p}\|x\|_{L^p(\M)} 
		\end{align*}
		where $E_p = 4E_p^{(1)}E_p^{(2)}$. Thus we obtain \eqref{after-induction} with $K=0$.
		The induction step from $K-1$ to $K$ can be justified
		similarly. Let us assume that $T$ satisfies
		$$
		\norm{\sum_{1\le n\le N} e_n T^nx}_{L^p(\T,L^p(\M))} \le  D_{SK}E_p^{K}N^{2\delta_p +(1/2-\delta_p)2^{-(K-1)}} \|x\|_{L^p(\M)} \, .
		$$
		Then $T$ satisfies \eqref{assumption} with
		$$D = D_{SK}E_p^K\quad \text{and} \quad f(N) = N^{2\delta_p+(1/2 -\delta_p)2^{-(K-1)}}, $$ and Lemma \ref{lemind2} gives, for every $N\in \N$ and every $x\in L^p(\M)$,
		$$
		\norm{\sum_{1\le n\le N} e_n T^nx}_{L^p(\M)} \le 4D_{SK}E_p^{(2)}E_p^KN^{\delta_p+(1/2 -\delta_p)2^{-K} }\|x\|_{L^p(\M)}.
		$$
		
		Now, $T$ satisfies \eqref{assumtionlemind1} with $$D = 4D_{SK}E_p^{(2)}E_p^K\quad \text{and} \quad\alpha = \delta_p+(1/2 -\delta_p)2^{-K}. $$ 
		Lemma \ref{lemind1} then yields, for every $N\in \N$ and every $x\in L^p(\M)$,
		\begin{align*}
			\norm{\sum_{1\le n\le N} e_n T^nx}_{L^p(\M)} & \le 4D_{SK}E_p^{(1)}E_p^{(2)}E_p^KN^{\delta_p+(1/2 -\delta_p)2^{-K} + \delta_p}\|x\|_{L^p(\M)}  
			\\& =  D_{SK} E_p^{K+1}N^{2\delta_p +(1/2-\delta_p)2^{-K}} \|x\|_{L^p(\M)}. 
		\end{align*}
	\end{proof}
	We are now ready to prove  Theorem  \ref{main-theorem}.
	
	\begin{proof}[Proof of Theorem \ref{main-theorem}]

		Without loss of generality, we may and do assume that $N\ge 3$. Let $K\ge 0$ be the  integer such that $ 2^{K+1} \le\log(N+1) \le 2^{K+2}$. Then we have $K+1 \le \log (\log (N+1))/\log 2$ and 
		\begin{equation}\label{control-constant}
			(E_p)^{K+1} \le {\rm exp}(\log E_p \log (\log (N+1))/\log 2) = \log^{\kappa_p}(N+1) \, .
		\end{equation} 
        with 
        		$$
		\kappa_p := \frac{\log E_p}{\log 2}. 
		$$
		Moreover, since $2^{-(K+1)} \le 2/\log(N+1),$
		\begin{equation}\label{inter-est}N^{(1/2-\delta_p)2^{-K}}\le N^{2^{-(K+1)}} \le {\rm e}^{2^{-(K+1)}\log(N+1)} \le {\rm e}^2\, .
		\end{equation}
		
		Using \eqref{after-induction}, \eqref{control-constant} and \eqref{inter-est}, we infer that for every $N\in \N$ and every $x\in L^p(\M)$,
		\begin{equation}\label{intermediary-estimate}
			\norm{\sum_{1\le n\le N} e_n T^nx}_{L^p(\T,L^p(\M))}  
			\le e^2D_{SK} N^{2\delta_p}\log^{\kappa_p} (N+1) \|x\|_{L^p(\M)}\, .
		\end{equation}

		Using that $T^*$ is strongly Kreiss bounded on $L^q(\M)$, $q=p/(p-1)$, we obtain a similar estimate for $T^*$, that is, for every $N\in \N$ and every $x\in L^q(\M)$,
		\begin{equation}\label{intermediary-qestimate}
			\norm{\sum_{1\le n\le N} e_n {T^*}^nx^*}_{L^q(\T,L^q(\M))}  
			\le e^2D_{SK} N^{2\delta_q}\log^{\kappa_q} (N+1)\|x^*\|_{L^q(\M)}\, .
		\end{equation}

		It follows that for all $x\in L^p(\M)$ and $x^*\in L^q(\M)$,
		\begin{align*}
			N \big|\langle x^*, T^{N}x\rangle\big| \\
			&= \Big|\int_\T \Big\langle \sum_{1\le n\le  N}
			e_n T^{*n}x^*, \sum_{1\le m\le N}\bar e_m 
			T^{N+1-m} x\Big\rangle \, d\lambda(\gamma) \Big| \\
			&\le \norm{\sum_{1\le n\le N} e_n T^{*n} x^*}_{L^q(\T,L^p(\M))}  \norm{\sum_{1\le n\le  N} e_n T^n x}_{L^p(\T,L^p(\M))}\\
			&\le e^4E_p^{(2)}N^{\delta_p+\delta_q} \log^{\kappa_p+\kappa_q}(N+1) \|x\|_{L^p(\M)}\|x^*\|_{L^q(\M)}
			\\
			&  = e^4D_{SK}^2 E_p^{(2)} N^{\left|\frac{1}{2}-\frac{1}{p}\right|} \log^{\kappa}(N+1)\|x\|_{L^p(\M)}\|x^*\|_{L^q(\M)}, 
		\end{align*}
		with
		$$
		\kappa := \kappa_p+\kappa_q.
		$$
		The result follows by taking the supremum over $x^*$ and $x$ with 
		$\|x\|_{L^p(\M)}= \|x^*\|_{L^q(\M)}=1$. 
	\end{proof}
	\medskip

	\section{Optimality of the polynomial rate in Theorem \ref{main-theorem} }
	
	In this section we give an example showing optimality in Theorem \ref{main-theorem} and we prove Proposition \ref{example}.

	\medskip

	This example has been presented by Lubich and Nevanlinna \cite{LN} to prove that the bound $\|T^N\|=\underset{N\to \infty}\OO(\sqrt N)$ is the best possible for strongly Kreiss bounded operators $T$ on general Banach spaces. More precisely, their example corresponds to the case where $p=\infty$ in the construction below.
	
	\medskip
	
	Lubich and Nevanlinna provided only an outline of the proof partly based on a book by Brenner, Thom\'ee and Wahlbin \cite{BTW}, which deals with Fourier analysis on the real line while the example  is defined on $\Z$. Hence, we decided to give here a detailed proof independent from the book \cite{BTW}, at least from its results.
	
	\medskip

	We denote by $A(\TT)$ the space of  continuous functions $f$ on $\TT$  having an 
	absolutely convergent Fourier series. 
	
	\medskip
	
	
	For  $f \in A(\TT)$, define 
	$f(R)$ where $R$ is the right shift on any of $\ell^p(\Z)$, $1\le p \le 
	\infty$, by the Hille-Phillips calculus:
	$$
	f(R) := \sum_{n\in \Z} c_n(f)R^n,
	$$
	where ${\mathbf{c_f}} := (c_n(f))_{n\in \Z}$ is the sequence of Fourier coefficients of $f$. We use here the convention that 
	$c_n(f)= \frac1{2\pi}\int_\TT f(t)e_{-n}(t)dt$. 
	\medskip

	Notice that 
	$$
	f(R) = T_{\bf c_f},  
	$$
	where for $\mathbf{m}:= (m_n)_{n\in \Z} \in \ell^{1}(\Z)$, the operator $T_{\mathbf{m}}$ is the convolution operator defined by
	
	$$\begin{array}{ccccc}
		T_\mathbf{m} & : & \ell^\infty(\Z) & \to & \ell^\infty(\Z) \\
		& & \mathbf{b} & \mapsto & \mathbf{m} \star \mathbf{b}.\\
	\end{array}
	$$
	It is known that 
	\begin{equation}\label{Tminfty}
		\norm{T_\mathbf{m}}_{B(\ell^{\infty}(\Z))} = \sum_{n\in \Z} |m_n|
	\end{equation}
	and that for every $1\le p <\infty$, 
	\begin{equation}\label{Tminfty}
		\norm{T_\mathbf{m}}_{B(\ell^{p}(\Z))} \le \norm{T_\mathbf{m}}_{B(\ell^{\infty}(\Z))}.
	\end{equation}
	Moreover, for $1\le p \le \infty$ and $q= \frac{p}{p-1}$, 
	\begin{equation}\label{Tmdual}
		\norm{T_\mathbf{m}}_{B(\ell^{p}(\Z))} = \norm{T_\mathbf{m}}_{B(\ell^{q}(\Z))}.
	\end{equation}
	
	\medskip
	
	Now, let us describe the function $f$ that will be used to construct our operator $V$.

	\medskip 
	
	Let  $a$ be a complex number with  $0<|a|<1$ and let $\varphi\in \R$. Set  
	$$
	q_{a,\varphi}(z):={\rm e}^{i\varphi}\frac{z-a}{1-\bar{a}z}, 
	\quad z \in \overline{\DD}$$
	
	The function $q_{a,\varphi}$ is a so-called M\"obius transformation mapping bijectively $\overline{\DD}$ onto itself, that is analytic from $\DD$ to $\DD$ and continuous from  $\overline{\DD}$ to $\overline{\DD}$.
	
	\medskip
	
	We associate with it a function from $A(\TT)$ by setting
	
	$$
\psi_{a,\varphi}:=q_{a,\varphi}\circ e_1\, .	
	$$

	In the sequel, when no confusion is possible, we simply write $\psi$ and $q$ instead of, respectively, $\psi_{a,\varphi}$ and $q_{a,\varphi} $.
	
	\medskip
	
	We define the operator  $V$, by setting
	$$
	V:= \psi(R).
	$$

	We shall now prove that $V$ enjoys all the properties described 
	in Proposition \ref{example}. The proof will be separated into three sections. 
	
	\medskip
	
	\subsection{Proof of the lower bound in \eqref{upper-lower}.}
	~~
	
	
	\medskip
	
	As mentioned in \cite{LN} one can establish the lower
	bound in \eqref{upper-lower}  for the operator $V$ by recasting the problem as that of obtaining a
	lower bound for the iterates of an operator on  $L^p(\R)$, using results of \cite{BTW}. Here,
	we present a more direct proof that does not rely on \cite{BTW}, except 
	for the use of van der Corput's lemma. Nonetheless, the core arguments are similar to those in \cite{BTW}. 
	
	\medskip

	In view of \eqref{Tmdual} (notice that $\tau_q=\tau_p$ if $1/p+1/q=1$) it is enough to prove the result for  $p\in [1,2]$. We consider this case now.

	\medskip

	Let $f \in A(\TT) $, to be chosen later. By Parseval's equality 
	and H\"older's inequality with $q=p/(p-1)$, for every $N\in \N$,
	\begin{align}
		  \label{first-BTW} \norm{f}_{L^2(\TT)}^2 &=  \norm{\psi^Nf}_{L^2(\TT)}^2 \\
		\nonumber     &= 2\pi\sum_{n \in \Z} c_n(\psi^Nf)\overline{c_n(\psi^Nf)} \\
		\nonumber     &\le 2\pi\norm{\big(c_n(\psi^Nf)\big)_{n\in \Z}}_{\ell^p(\Z)}\norm{\big(c_n(\psi^Nf)\big)_{n\in \Z}}_{\ell^{q}(\Z)} \\
		\nonumber   & \le 2\pi \|V^N\|_{B(\ell^p(\Z))}\norm{(c_n(f))_{n\in \Z}}_{\ell^{p}(\Z)}\norm{(c_n(\psi^Nf))_{n\in \Z}}_{\ell^{q}(\Z)}.
	\end{align}
	By Hölder's inequality, for every $N\in \N$ we have
	\begin{align}
		 \label{second-BTW}\norm{\big(c_n(\psi^Nf)\big)_{n\in \Z}}_{\ell^{q}(\Z)} &\le \norm{\big(c_n(\psi^Nf)\big)_{n\in \Z}}_{\ell^{2}(\Z)}^{\frac{2}{q}}\norm{\big(c_n(\psi^Nf)\big)_{n\in \Z}}^{1-\frac{2}{q}}_{\ell^{\infty}(\Z)}  \\
		\nonumber  &= \frac{\norm{f}_{L^2(\TT)}^{\frac{2}{q}}}{(2\pi)^{\frac{1}{q}}}\norm{\big(c_n(\psi^Nf)\big)_{n\in \Z}}^{\frac{2}{p}-1}_{\ell^{\infty}(\Z)}.
	\end{align}
	Combining \eqref{first-BTW} and \eqref{second-BTW} and using that $\norm{(c_n)_{n\in \Z}}_{\ell^{p}(\Z)} \le \norm{(c_n)_{n\in \Z}}_{\ell^{1}(\Z)}$, we obtain, for every $N\in \N$,
	$$
	\|V^N\|_{B(\ell^p(\Z))}\ge \frac{(2\pi)^{1-\frac{1}{q}}\norm{f}_{L^2(\TT)}^{1-\frac{2}{q}}}{\norm{(c_n(f))_{n\in \Z}}_{\ell^{1}(\Z)}}\norm{\big(c_n(\psi^Nf)\big)_{n\in \Z}}^{1-\frac{2}{p}}_{\ell^{\infty}(\Z)} .
	$$

	Let us assume for a moment that there exists $D=D_f$ such that for every $N \in \N$,
	\begin{equation}\label{findf}
		\norm{\big(c_n(\psi^Nf)\big)_{n\in \Z}}_{\ell^{\infty}(\Z)} \le \frac1{DN^{1/2}}\, .
	\end{equation}
	Then, for every $N \in \N$,
	$$
	\|V^N\|_{B(\ell^p(\Z))}\ge \frac{\min (1, 2\pi \|f\|_{L^2(\TT)})}{\norm{(c_n(f))_{n\in \Z}}_{\ell^{1}(\Z)}}\min(1, D)N^{1/p-1/2}\, ,
	$$
	which is exactly the desired lower bound in \eqref{upper-lower}, provided that $f$ is not the null function. 
	
	\medskip
	
	It remains to prove that we can find a function $f\in A(\TT)$, $f\not\equiv 0$, that satisfies \eqref{findf}.
	
	
	\medskip
	
	For every $t\in \TT$, set $\sigma(t):= -i \int_0^t \frac{\psi'}\psi(u)du$. Then, $\sigma$ is twice continuously differentiable on $\TT$ and 
	$\psi(t)={\rm e}^{i\sigma(t)}$, for every $t\in \TT$.  Moreover, since $|\psi|\equiv 1$, $\sigma$ is real-valued.

	\medskip

	Since $q$ is not a rotation $\sigma''$ is not identically equal to 0 and there exist $-\pi < \alpha< \beta < \pi$ and $c>0$ such that
	$|\sigma''(t)| \ge c$, for every $t\in [\alpha,\beta]$.

	Let  $f$ be any function in $ A(\TT)$, not identically equal to $0$, such that $f(t) = 0$ if $t\in \T \backslash [\alpha,\, \beta]$ and $f \in C^1((\alpha,\beta))$. 
	
	\medskip

	Integrating by parts, we obtain
	\begin{align*}
		2\pi c_n(\psi^Nf)   &  = \int_{-\pi}^{\pi} f(t)\psi^N(t)e^{-int}dt = \int_{\alpha}^{\beta} f(t)\psi^N(t)e^{-int}dt \\
        & =\int_{\alpha}^{\beta} f'(t)\int_{\alpha}^t \psi^N(x)e^{-inx}dx dt\, \\
		& = \int_{\alpha}^{\beta} f'(t)\int_{\alpha}^t e^{-inx+iN\sigma(x)}dx dt\, 
	\end{align*}
	for all $n$ and $N$ from $\N$. By van der Corput's lemma (see \cite[Lemma 1.5.1]{BTW}), there exists $c>0$ such that for all $n,N\in \N$ and $t\in [\alpha, \beta]$, 
	$$
	\Big|\int_{\alpha}^t e^{-inx+iN\sigma(x)}dx \Big|\le 8c^{-1/2}N^{-1/2},
	$$
	so that 
	$$
	\sup_{n\in \Z} |c_n(\psi^Nf)| \le C\norm{f'}_{L^1(\TT)}N^{-1/2}
	$$
	with $C = \frac{4c^{-1/2}}{\pi}$, which is exactly \eqref{findf}.

	\medskip
	
	\subsection{Proof that $V$ is strongly Kreiss bounded on $\ell^{p}(\Z)$}

	~~
	
	\medskip

	By \eqref{Tminfty} it suffices to prove that $V$ is strongly Kreiss bounded on $\ell^{\infty}(\Z)$.  The proof of that fact  has been outlined by 
	Lubich and Nevanlinna \cite{LN}. We propose below a complete and detailed proof, from which the uniformity of the obtained bounds with respect to the different parameters involved will be clear.

	\medskip
	
	For $\lambda \in \C$ with $|\lambda|> 1$ and $k\in \N$,  consider the function $\chi_{\lambda,k}$  defined by
	$$
	\chi_{\lambda,k} := \Bigg(\frac{|\lambda|-1}{\lambda-\psi}\Bigg)^k\, .
	$$
	Since 
	$$
	(|\lambda|-1)^k{R(\lambda,V)^k} = \chi_{\lambda,k}(V),
	$$
	to prove that $V$ is strongly Kreiss bounded on $\ell^{\infty}(\Z)$, it suffices to control the $\ell^1$-norm of the sequence of Fourier coefficients $(c_n(\chi_{\lambda,k}))_{n\in \Z}$, uniformly with respect to $\lambda$ and $k$.  
	
	\medskip
	
	Write $\lambda=|\lambda|{\rm e}^{i\alpha}$ for some $\alpha\in \R$ and let $\tau\in \R$ be such that $\psi(\tau )={\rm e}^{i\alpha}$.

	\medskip
	
	Clearly, it is enough to control the $\ell^1$-norm of the Fourier coefficients of 
	$$ \TT \ni t\mapsto\Bigg(\frac{|\lambda|-1}{|\lambda|- 
		{\rm e}^{-i\alpha}\psi(t+\tau)}\Bigg)^k,
	$$ uniformly with respect to $k\in \N$ and $\lambda\in \C$, $|\lambda| > 1$.
	
	\medskip

	Notice that 
	$${\rm e}^{-i\alpha}\psi_{a,\varphi}(\cdot +\tau)= \psi_{a{\rm e}^{-i\tau}, \varphi -\alpha}.
	$$
	
	\medskip
	
	In particular, it is sufficient to assume that $\lambda$ is a real number satisfying $\lambda>1$ and that $\psi(0)=1$. Moreover, since $a$ and $a {\rm e}^{-i\tau}$ have the same modulus, for our purpose it is enough to obtain a bound depending only on $|a|$, and uniform with respect to $\varphi\in \R$.
	
	\medskip
	In view of our assumption, in particular, we have
	\begin{equation}\label{calc}
		\frac{{\rm e}^{i\varphi}}{1-\bar a}
		=\frac{1}{1-a}\, .
	\end{equation}
	
	For brevity, we will write $\chi_k$ instead of $\chi_{\lambda,k}$ throughout the sequel. All variables indexed by $k$ will be understood to depend implicitly on both $k$ and $\lambda$.

	\medskip

	The proof makes use of the following three lemmas, which we state now and will prove in the appendix. The first one is a standard result in the spirit of Carlson's inequality.
	\begin{lemma}\label{carlson-lemma}
		Let $h \in A(\TT)$ be such that $h' \in L^2(\TT)$, where $h'$ stands for the derivative of $h$ in the sense of distributions.
		For every real number $x\ge 1$, we have 
		\begin{equation}\label{carlson-bis}
			\sum_{n\in \Z}|c_n(h)|\le \sqrt 2(x+1)\|h\|_{L^2(\TT)}+ \frac{\sqrt 2}{x}
			\|h'\|_{L^2(\TT)}\, .
		\end{equation}
	\end{lemma}

	The next lemma was stated in \cite[Example 2.2]{LN}.

\begin{lemma}\label{auxilliary}
		Let $\psi_{a,\varphi}$ be as above. There exists $\gamma >0$, depending only on $|a|$, such that for all $t\in \TT$ and $\lambda>1$, 
		\begin{equation}\label{aux-bound}
			\Bigg|\frac{\lambda -1}{\lambda-\psi_{a,\varphi}(t)}\Bigg|\le \frac1{\big(1+\frac{\gamma t^2}{\mu}\big)^{1/2}}\, , 
		\end{equation}
		where $\mu=\min (\lambda-1, (\lambda-1)^2)$.
	\end{lemma}
	

	\begin{lemma}\label{theta}
		For all $k\in \N$, $\lambda>1$, $\varphi \in \R$ and $a\in \DD\backslash\{0\}$, there exist a $C^1$ function $\theta_k=\theta_{k,\lambda}$ on $\TT$ and a constant $C>0$ depending only on $|a|$ such that for all
		$t\in \TT$,
		\begin{gather}
			\label{theta-infty} |\theta_{k}(t)|\le 1\,  ;\\
			\label{estthetachik}
			\big|\big(\theta_k\chi_k\big)'(t)\big| \le \frac{Ck|t|}{\mu\big(1+\frac{\gamma t^2}{\mu}\big)^{k/2}}\, ;\\
			\label{theta-ell1}
			\sum_{n\in \Z}\Big|c_n\Big(\frac{1}{\theta_k}\Big)\Big| \le 2\, ,  
		\end{gather}
		
	\end{lemma}
	
	\begin{remark}
	Here, by $C^1$, we mean that $\theta$ may be extended to a $2\pi$-periodic $C^1$ function on the whole real line.
	\end{remark}

	\medskip

	We are now ready to prove that $V$ is strongly Kreiss bounded on 
	$\ell^\infty(\Z)$.
	
	\medskip
	
	We proceed to show that there exists $C>0$ depending only on $|a|$,  such that for every  $k\in \N$ and every real number $\lambda > 1$,
	\begin{equation}\label{cnchibounded}
		\sum_{n\in \Z} |c_n(\chi_{k})| \le C.
	\end{equation}

	First of all, using \eqref{theta-ell1},
	\begin{align}
		\label{controlchik}\sum_{n \in \Z} |c_n( \chi_k)| &= \sum_{n \in \Z} \Big|c_n\Big( \frac{1}{\theta_k}\theta_k\chi_k\Big)\Big| =\sum_{n \in \Z} \Big| \sum_{r\in \Z} c_r\Big(\frac{1}{\theta_k}\Big)  c_{n-r}(\theta_k\chi_k)\Big|  \\
		\nonumber&\le \Big(\sum_{n \in \Z} \Big|c_n\Big(\frac{1}{\theta_k}\Big)\Big| \Big) \Big( \sum_{n \in \Z}  |c_n(\theta_k\chi_k)|\Big) \le 2\sum_{n \in \Z}  |c_n(\theta_k\chi_k)|.
	\end{align}
	Moreover, by \eqref{aux-bound} and \eqref{theta-infty}, we see that (using the change of variable $u=\sqrt{k/\mu} t$), 
	\begin{align}
		\label{first-control}\int_{-\pi}^\pi |\theta_k\chi_k|^2(t)dt &\le \int_{-\pi}^\pi\frac{dt}{(1+\frac{\gamma t^2}{\mu})^{k}} \\
		\nonumber&\le \Big(\frac{\mu}k\Big)^{1/2}
		\int_{-\infty}^{+\infty} \frac{du}{(1+\frac{\gamma u^2}k)^k} \\
		\nonumber&\le \Big(\frac{\mu}k\Big)^{1/2}
		\int_{-\infty}^{+\infty} \frac{du}{(1+\gamma u^2)}
		\\
		& \nonumber\le C \Big(\frac{\mu}k\Big)^{1/2}\, , 
	\end{align}
	where we used that for every $x>0$, $y\mapsto (1+x/y)^y$ is increasing on $[1,+\infty)$.
	
	\medskip
	
	On the other hand, $|\theta_k \chi_k|\le 1$, so that 
	\begin{equation}\label{first-control-bis}
		\int_{-\pi}^\pi |\theta_k\chi_k|^2(t)dt\le \min \Big(2\pi,  C \Big(\frac{\mu}k\Big)^{1/2}\Big)\, .
	\end{equation}

	We shall now estimate the $L^2$-norm of the derivative of 
	$\theta_k \chi_k$. Proceeding as above, using \eqref{estthetachik},  we obtain  

	\begin{align}
		\label{first-derivative}\int_{-\pi}^\pi |(\theta_k\chi_k)'|^2(t)dt &   \le \frac{4D^2k^2}{\mu^2} \Big(\frac{\mu}{k+1}\Big)^{3/2}\int_{-\infty}^{+\infty}\frac{u^2 du}{(1+\frac{\gamma u^2}{k+1})^{k+1}} \le \frac{C \sqrt k }{\sqrt \mu}\, .
	\end{align}

	If $C \Big(\frac\mu{k}\Big)^{1/2}<2\pi$, we apply \eqref{carlson-bis} 
	with $x^2 = \frac{k}{C^2\mu}$ and obtain
	\begin{align}
		\label{first-case} \sum_{n\in \Z} |c_n(\theta_k\chi_k)| &\le \sqrt{2}\Bigg(\sqrt{\frac{k}{C^2\mu}}+1\Bigg)\times C\Big(\frac\mu{k}\Big)^{1/2} + \sqrt{\frac{C^2\mu}{k}}\times \frac{C\sqrt{k}}{\sqrt{\mu}}
		\\\nonumber &\le \sqrt{2}\Big(1+C\Big(\frac\mu{k}\Big)^{1/2}\Big) + C^2 \le \sqrt{2}(1+2\pi) + C^2\, .
	\end{align}
	
	\medskip

	
	If $C \Big(\frac\mu{k}\Big)^{1/2}\ge 2\pi$, we apply \eqref{carlson-bis} 
	with $x=1$ and obtain
	\begin{align}\label{second-case}
		\sum_{n\in \Z} |c_n(\theta_k\chi_k)| &\le 4\sqrt{2}\pi + \frac{C\sqrt{k}}{\sqrt{\mu}}
		\le4\sqrt{2}\pi + \frac{C^2}{2\pi}.
	\end{align}
	
	Combining \eqref{first-case} and \eqref{second-case}, we infer that there exists $C>0$ such that  
	$$
	\sum_{n\in \Z} |c_n(\theta_k\chi_k)| \le C
	$$
	which, by \eqref{controlchik}, implies \eqref{cnchibounded}. Hence $V$ is strongly Kreiss bounded on every $\ell^p(\Z)$$, 1\le p\le \infty$.
	
	
	\subsection{Proof of the upper bound in \eqref{upper-lower}}
	
	Since $V$ is strongly Kreiss bounded on $\ell^{\infty}(\Z)$, by a result of Lubich and Nevanlinna \cite{LN} recalled in the introduction, there exists $C>0$ such that for every $N \in \N$
	$$
	\norm{V^N}_{B(\ell^{\infty}(\Z))} \le CN^{1/2}\, .
	$$
	Now, $V$ is an isometry on $\ell^{2}(\Z)$, hence, by the  Riesz-Thorin interpolation theorem applied to $V^N$ 
	(for every $N \in \N$), we infer that the upper bound in \eqref{upper-lower} holds for every $p\in [2,\infty]$. The case where $p\in [1,2)$ then follows from \eqref{Tmdual}.

	\section{Some particular strongly Kreiss bounded operators}
	
	We consider here the case of \emph{positive} strongly Kreiss bounded operators on $L^p(\M)$ (we no longer require $\mu$ to be $\sigma$-finite)  or \emph{absolutely} strongly Kreiss bounded operators (on any Banach space). The proof makes use of an idea on obtaining norm bounds on powers of Kreiss operators from  \cite{AC}. 
	\medskip
	
	Let us begin with a general result.
	\begin{lemma}\label{general}
		Let $1\le p \le 2$, $C,D \ge 1$ and $\alpha > 0$. Let $T$ be a bounded operator on a Banach space  $X$ such that for every $N\in \N$ and every $x\in X$
		\begin{equation}\label{general-assumption2}
			\sum_{N+2 -2\sqrt{N} \le n \le N} \norm{T^nx}^p \le C^pN^{p/2}
			\norm{x},
		\end{equation} 
		and
		\begin{equation}\label{general-assumption1}
			\norm{T^N} \le DN^{\alpha}.
		\end{equation}
		Then, there exist $E>0$ and $\kappa >0$ such that for every $N\in \N$,
		\begin{equation}\label{resposLem}
			\|T^N\| \le EN^{1/q}\log^\kappa (N+1).
		\end{equation}
		
	\end{lemma}
	\begin{proof}
		We start with the following observation. For all $x\in X$ and
		$x^*\in X^*$,
		\begin{align*}
			(\lfloor2\sqrt N\rfloor-1)|\langle x^*, T^{N+1}x\rangle |^p  &  \le \sum_{1\le n\le 2\sqrt N-1}| \langle T^{*n}x^*, T^{N+1-n}x\rangle \big|_{X^*,X}^p\\ \nonumber
			&  \le \|x^*\|^p\max_{1\le n\le 2\sqrt N-1}\|T^{*n}\|^p
			\sum_{ N -2 \sqrt{N}+2\le n\le N}\|T^{n}x\|^p\, .
		\end{align*}
		Taking the supremum over $x,x^*$ of norm 1, using that $\|T^{*n}\|=\|T^n\|$
		for every $N\in \N$,
		\begin{align*}\label{first-estimate}
			\|T^{N}\| & \le  (\lfloor2\sqrt{N}\rfloor-1)^{-1/p}\max_{1\le n\le 2\sqrt N-1} \|T^n\|\sup_{\|x\| \le 1} \Big(\sum_{ N -2 \sqrt{N}+2\le n\le N}\|T^{n}x\|^p\Big)^{1/p}. \\
			&\le N^{-1/(2p)}\max_{1\le n\le 2\sqrt N} \|T^n\|\sup_{\|x\| \le 1} \Big(\sum_{ N -2 \sqrt{N}+2\le n\le N}\|T^{n}x\|^p\Big)^{1/p}. 
		\end{align*}
		Using assumption \eqref{general-assumption2}, for all $N\in \N$,
		\begin{equation}\label{iteration}
			\|T^{N}\| \le CN^{1/(2q)}\max_{1\le n\le 2\sqrt N}\|T^n\|.
		\end{equation}
		By induction, we then obtain that for all $N,K\in \N$, 
		$$
		\|T^N\|\le D (2^{\alpha} C)^K N^{2^{-K}\alpha+(1-2^{-K})/q}\, ,
		$$
		Indeed, combining estimate \eqref{iteration} with \eqref{general-assumption1}, we obtain that for all $N \in \N$, 
		\begin{equation}\label{iterationbis}
			\|T^N\|\le DC2^{\alpha} N^{\alpha/2 +1/2q}\,,
		\end{equation}
		which gives the case $K=1$. For the inductive step from $K-1$ to $K$, we combine the inductive hypothesis with \eqref{iteration}.
		Finally, we conclude as in the proof of Theorem \ref{main-theorem} to obtain \eqref{resposLem}.
	\end{proof}
	From Lemma \ref{general} we deduce a better bound  than the one obtained in Theorem \ref{main-theorem} for  strongly Kreiss bounded positive operators on $L^p(\M)$ when $p\in [1,4/3)\cup (4,+\infty)$ (the cases where $p = 1$ and $p=\infty$ are discussed below).
	\begin{proposition}\label{positive}
		Let $T$ be a positive operator that is strongly Kreiss bounded on $L^p(\M)$, $1\le p<\infty$. Then there exist $C,\kappa>0$ such that for every $N\in \N$,
		$$\|T^N\|\le CN^{1/\bar p}\log^\kappa (N+1),$$ 
		where $\bar p=\max(p,p/(p-1))$.
	\end{proposition}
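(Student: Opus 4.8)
The plan is to deduce Proposition~\ref{positive} from Proposition~\ref{general} by verifying its two hypotheses for a positive strongly Kreiss bounded operator $T$ on $L^p(\mu)$ with $1 \le p \le 2$, and then handling $p > 2$ by duality. Hypothesis \eqref{general-assumption1} is immediate: a strongly Kreiss bounded operator on \emph{any} Banach space satisfies $\|T^N\| = \mathcal{O}(\sqrt N)$ by Lubich--Nevanlinna \cite{LN}, so \eqref{general-assumption1} holds with $\alpha = 1/2$. The real content is \eqref{general-assumption2}: for every $x \in L^p(\mu)$,
$$
\sum_{N-2\sqrt N \le n \le N} \|T^n x\|_{L^p(\mu)} \le C N^{p/2} \|x\|_{L^p(\mu)}.
$$

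First I would exploit positivity to reduce to $x \ge 0$. For such $x$, all $T^n x \ge 0$, so $\sum_{n} |T^n x|$ is literally the sum of the iterates (no cancellation lost), and I can write $\sum_{N-2\sqrt N \le n \le N} \|T^n x\|_p \le \big\|\sum_{N-2\sqrt N \le n \le N} T^n x\big\|_p$ only in the wrong direction --- so instead I want to bound each $\|T^n x\|_p$ by pulling out the $\mathrm{e}^{NT}$ trick used in Lemma~\ref{lemind2}. The key observation is that for $n$ in the window $[N - 2\sqrt N, N]$, the Poisson-type weight $\sum_{n - \sqrt N \le k \le n} N^k/k!$ is comparable to $\mathrm{e}^N/\sqrt N$ by Lemma~\ref{technical}; hence, using $T^n x \ge 0$ and positivity of all coefficients,
$$
\mathrm{e}^{NT} x = \sum_{m \ge 0} T^m x \frac{N^m}{m!} \ge \sum_{N - 2\sqrt N \le n \le N} T^n x \Big(\sum_{n - \sqrt N \le k \le n} \frac{N^k}{k!}\Big) \ge \frac{c\,\mathrm{e}^N}{\sqrt N} \sum_{N-2\sqrt N \le n \le N} T^n x,
$$
where every inequality is between nonnegative functions. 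Taking $L^p(\mu)$-norms and using strong Kreiss boundedness $\|\mathrm{e}^{NT}\| \le L \mathrm{e}^N$ gives $\big\|\sum_{N-2\sqrt N \le n \le N} T^n x\big\|_p \le C \sqrt N \|x\|_p$. Since the $T^n x$ are nonnegative, $\sum_n \|T^n x\|_p = \sum_n \|T^n x\|_p$ and each term is $\le \big\|\sum_n T^n x\big\|_p$, but more simply $\sum_{n} \|T^n x\|_p \le (2\sqrt N + 1)\,\max_n \|T^n x\|_p \le C \sqrt N \cdot \big\|\sum_n T^n x\big\|_p \le C N \|x\|_p$ --- wait, this only gives exponent $1$, not $p/2$. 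The correct route: by positivity $\sum_{N-2\sqrt N \le n \le N} \|T^n x\|_{p}^{p}$ relates to $\big\|(\sum_n (T^n x)^p)^{1/p}\big\|_p$... Actually the cleanest is $\sum_n \|T^n x\|_p \le \big(\sum_n \|T^n x\|_p^p\big)^{1/p}(2\sqrt N+1)^{1/q}$ by Hölder, and $\sum_n \|T^n x\|_p^p = \big\|\big(\sum_n |T^n x|^p\big)^{1/p}\big\|_p^p \le \big\|\sum_n T^n x\big\|_p^p$ (using $p \ge 1$, nonnegativity, and $\sum a_k^p \le (\sum a_k)^p$), giving $\sum_n \|T^n x\|_p \le (2\sqrt N+1)^{1/q} \cdot C\sqrt N \|x\|_p \le C N^{1/(2q) + 1/2}\|x\|_p = C N^{1/2 + 1/2 - 1/(2p)}\|x\|_p$; to land exactly on $N^{p/2}$ I note $1 - 1/(2p) \le p/2$ for $p \in [1,2]$ (since $2 - 1/p \le p^2$, i.e. $p^3 - 2p + 1 = (p-1)(p^2+p-1) \ge 0$ for $p \ge 1$), so \eqref{general-assumption2} follows with this $C$.

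With both hypotheses of Proposition~\ref{general} verified, I get $\|T^N\| = \mathcal{O}(N^{1/q}(\log N)^\kappa)$ for $1 \le p \le 2$, where $q = p/(p-1) = \bar p$. For $2 < p < \infty$, apply the case just proved to the adjoint $T^*$, which is positive and strongly Kreiss bounded on $L^q(\mu)$ with $1 < q < 2$; this yields $\|T^N\| = \|T^{*N}\| = \mathcal{O}(N^{1/(q/(q-1))}(\log N)^\kappa) = \mathcal{O}(N^{1/p}(\log N)^\kappa)$, and since $\bar p = \max(p, p/(p-1)) = p$ here, this is the claimed bound. The main obstacle I anticipate is pinning down the exponent bookkeeping in the Hölder step --- making sure the exponent produced by the argument is genuinely $\le p/2$ over the whole range $p \in [1,2]$ --- together with the (routine but needed) verification that the positivity of $T$ really does license passing to the pointwise inequality $\mathrm{e}^{NT} x \ge \frac{c\mathrm{e}^N}{\sqrt N}\sum_n T^n x$ and that the window-weight comparability from Lemma~\ref{technical} applies uniformly in $N$.
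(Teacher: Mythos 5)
Your proof follows essentially the same route as the paper: for $1\le p\le 2$ you verify the two hypotheses of Proposition~\ref{general} (with $\alpha=1/2$ from Lubich--Nevanlinna, and \eqref{general-assumption2} from positivity, pointwise domination by $\mathrm{e}^{NT}x$, and Lemma~\ref{technical}), and you treat $p>2$ by passing to $T^*$ on $L^q(\mu)$ --- a step the paper's one-line proof leaves implicit but clearly intends. Two local points should be fixed. First, your displayed inequality $\mathrm{e}^{NT}x\ge\sum_{N-2\sqrt N\le n\le N}T^nx\bigl(\sum_{n-\sqrt N\le k\le n}N^k/k!\bigr)$ is false as written: the coefficient of $T^nx$ in $\mathrm{e}^{NT}x$ is $N^n/n!$, which by \eqref{technical1} is of order $\mathrm{e}^N/\sqrt N$, i.e.\ smaller by a factor of order $\sqrt N$ than the window sum $\sum_{n-\sqrt N\le k\le n}N^k/k!$; that window weight arises in Lemma~\ref{lemind2} only because of the extra factor $M_{N,\gamma}$. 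The endpoint of your chain, $\mathrm{e}^{NT}x\ge c\,\mathrm{e}^N N^{-1/2}\sum_n T^nx$ for $x\ge 0$, is nevertheless correct and follows directly from keeping only the terms $N-2\sqrt N\le n\le N$ and applying \eqref{technical1} to $N^n/n!$. Second, the hypothesis actually used in the proof of Proposition~\ref{general} is $\sum_n\|T^nx\|^p\le C N^{p/2}\|x\|^p$ (the statement of \eqref{general-assumption2} is evidently missing the $p$-th powers, as one sees from the display $\sup_{\|x\|\le 1}\bigl(\sum_n\|T^nx\|^p\bigr)^{1/p}\le CN^{1/2}$ in that proof); your intermediate estimate $\sum_n\|T^nx\|_p^p\le\bigl\|\sum_n T^nx\bigr\|_p^p\le C^pN^{p/2}\|x\|_p^p$ is exactly this corrected condition, which is in substance the paper's own argument, so your final H\"older step converting it into the literal form of \eqref{general-assumption2} is unnecessary. (In that step the bookkeeping also contains a slip: the inequality you need is $1-1/(2p)\le p/2$, equivalent to $(p-1)^2\ge 0$, not $2-1/p\le p^2$; it is true in any case.) With these repairs the argument is correct and matches the paper's.
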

	
	The proof is straightforward using the following lemma and the fact that every strongly Kreiss bounded operator satisfies \eqref{general-assumption1} for $\alpha = 1/2$.
	
	\begin{lemma}
		Let $1\le p <\infty$. Any positive strongly Kreiss bounded operator $T$ on $L^p(\M)$ satisfies \eqref{general-assumption2} for every $x \in L^p(\M)$. 
	\end{lemma}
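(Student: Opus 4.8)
\medskip

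The plan is to bypass the Fourier-multiplier machinery of Sections~2--3 entirely and reduce the statement to two elementary monotonicity arguments, using positivity of $T$ twice: first to reduce to nonnegative $x$, then to dominate the relevant partial sum pointwise by $\e^{NT}x$, whose norm is controlled by \eqref{strong-kreiss}.

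First I would reduce to $x\ge0$. Since $T$ is positive, $|T^nx|\le T^n|x|$ $\mu$-a.e.\ for every $n$, so $\norm{T^nx}_{L^p(\mu)}\le\norm{T^n|x|}_{L^p(\mu)}$ while $\norm{\,|x|\,}_{L^p(\mu)}=\norm{x}_{L^p(\mu)}$; hence it is enough to prove \eqref{general-assumption2} for $x\ge0$. For such $x$ every $T^nx$ is nonnegative, and since $p\ge1$ the pointwise bound $\sum_n(T^nx)^p\le(\sum_nT^nx)^p$ holds; integrating it gives
$$
\sum_{N-2\sqrt N\le n\le N}\norm{T^nx}_{L^p(\mu)}^p\ \le\ \Big\|\sum_{N-2\sqrt N\le n\le N}T^nx\Big\|_{L^p(\mu)}^p ,
$$
so the whole matter comes down to showing $\big\|\sum_{N-2\sqrt N\le n\le N}T^nx\big\|_{L^p(\mu)}\le C\sqrt N\,\norm{x}_{L^p(\mu)}$ for $x\ge0$.

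For this I would take $z=N\in\R$ in \eqref{strong-kreiss}, giving $\norm{\e^{NT}x}_{L^p(\mu)}\le L\e^N\norm{x}_{L^p(\mu)}$. Writing $\e^{NT}x=\sum_{k\ge0}\frac{N^k}{k!}T^kx$ (the partial sums increase $\mu$-a.e.\ and converge in $L^p(\mu)$), and using positivity to drop all indices outside the window $N-2\sqrt N\le k\le N$ together with the lower bound $\frac{N^k}{k!}\ge\e^N/(C\sqrt N)$ on that window, I would obtain
$$
0\ \le\ \frac{\e^N}{C\sqrt N}\sum_{N-2\sqrt N\le k\le N}T^kx\ \le\ \e^{NT}x\qquad\mu\text{-a.e.}
$$
Since the $L^p(\mu)$-norm is monotone on the positive cone, taking norms and rearranging yields $\big\|\sum_{N-2\sqrt N\le n\le N}T^nx\big\|_{L^p(\mu)}\le CL\sqrt N\,\norm{x}_{L^p(\mu)}$, which together with the previous step is exactly \eqref{general-assumption2} (in the $p$-th power form $\sum\norm{T^nx}^p\le C'N^{p/2}\norm{x}^p$ used in the proof of Proposition~\ref{general}).

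I do not expect a genuine obstacle here: the content is simply the observation that positivity replaces the entire Littlewood--Paley argument by the two monotonicity steps above. The only points needing a little care are the bookkeeping at the left endpoint $n=N-2\sqrt N$ and the fact that the lower bound in \eqref{technical1}, stated there for $K\in[2-2\sqrt N,0]$, still holds up to a constant on the slightly larger window used above; this follows from the same Stirling estimate, the Gaussian weight $\e^{-K^2/2N}$ remaining bounded below by $\e^{-2}$ for $|K|\le2\sqrt N$.
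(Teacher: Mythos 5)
Your proof is correct and is essentially the paper's own argument: both rest on reducing to $x\ge 0$ by positivity, the pointwise superadditivity $\sum_n a_n^p\le(\sum_n a_n)^p$ for nonnegative terms, the lower bound $N^k/k!\ge \e^N/(C\sqrt N)$ on the window from \eqref{technical1}, and the strong Kreiss bound on $\|\e^{NT}x\|$ — you merely perform the truncation to the window and the superadditivity step in the opposite order. Your explicit remark about the left endpoint (where \eqref{technical1} is stated only for $K\ge 2-2\sqrt N$, two indices short of the window $N-2\sqrt N\le n\le N$) addresses a detail the paper silently glosses over, and your resolution of it is correct.
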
 
	\begin{proof}
		Using  the fact that $\|\cdot \|_{\ell^p}\le \|\cdot \|_{\ell^1}$, the positivity of $T$, the Krivine calculus (\cite[Theorem 1.d.1]{LT79} and Lemma \ref{technical},  we obtain that for all $N\in \N$ and all nonnegative $x\in L^p(\M)$,
		\begin{align*}
			\Big( \sum_{n\ge 0}\frac{N^n T^n x}{n!}\Big)^p &\ge \sum_{n\ge 0} \Big(\frac{N^n T^n x}{n!}\Big)^p  \\ \nonumber 
            &\ge \sum_{ N -2 \sqrt{N}+2\le n\le N}\Big(\frac{N^n T^n x}{n!}\Big)^p\, .
            \\ & \ge \frac{C_{exp}^p{\rm e}^{pN}}{ N^{p/2}} \sum_{ N -2 \sqrt{N}+2\le n\le N} (T^n x)^p. \nonumber          
		\end{align*}\label{arnold-coine}
		Integrating with respect to $\mu$ and using the strong Kreiss boundedness of $T$, it follows that for all $N\in \N$ and all nonnegative $x \in L^p(\M)$,  
		\begin{align*}
			C_{SK}e^{pN}\norm{x}^p &\ge \norm{e^{NT}x}^p = \int_{\M} \Big( \sum_{n\ge 0}\frac{N^n T^n x}{n!}\Big)^p d\mu \\
			&\ge \frac{C_{exp}^p{\rm e}^{pN}}{ N^{p/2}}\int_{\M} \sum_{ N -2 \sqrt{N}+2\le n\le N} (T^n x)^p d\mu \\
            &= \frac{C_{exp}^p{\rm e}^{pN}}{N^{p/2}} \sum_{ N -2 \sqrt{N}+2\le n\le N} \norm{T^nx}^p.
		\end{align*}
		Recall that $C_{SK}$ denotes the smallest constant $L>0$ such that \eqref{strong-kreiss} holds.
		This implies that \eqref{general-assumption2} is true for all $x\in L^p(\M)$ with $C = \frac{C_{SK}}{C_{exp}}$.  
		
	\end{proof}

	We turn now to the special case of absolutely strongly Kreiss bounded operators. Let $T$ be a bounded operator on $X$. We say that $T$ is absolutely strongly Kreiss bounded if there exists $C>0$ such that 
	$$
	\sum_{n=0}^{\infty}\frac{r^n}{n!}\norm{T^nx} \le Ce^r\norm{x}, \quad r>0, \, x\in X. 
	$$   
	These operators were first introduced in \cite[remarks p.17]{CCEL}. Such operators are clearly strongly Kreiss bounded (see \cite[Proposition 4.10]{CCEL}).  The operator $T$ defined in \eqref{exampleStrKreiss} provides an example of an operator which is absolutely strongly Kreiss and not power-bounded. Moreover, $T^*$ is strongly Kreiss bounded but cannot be absolutely strongly Kreiss, since in this case $T$ would be power-bounded (see \cite[Proposition 2.1]{CCEL}). Using Lemma \ref{technical}, for every $x \in X$, the operator $T$ satisfies  \eqref{general-assumption2} with $p = 1$. We can then apply Lemma \ref{general} to show that $\norm{T^N}$ has a logarithmic bound.
	\begin{proposition}\label{absstrong}
		Let $T$ be an absolutely strongly Kreiss bounded operator on $X$. Then, there exist $C,\kappa >0$ depending only on $C_{SK}$, such that for every $N\in \N$,
		\begin{equation}\label{absstrongbound}
			\norm{T^N} \le  C\log^\kappa(N+1).
		\end{equation}
		\begin{remark}
			When $X =L^p(\M)$, $1\le p <\infty$, the bound \eqref{absstrongbound} is sharp. Indeed, according to \cite[remark 1, page 16]{CCEL}, the operators $T_{\kappa}$, $\kappa >0$, defined by \eqref{exampleStrKreiss}, are absolutely strongly Kreiss bounded on $\ell^p(\Z)$ and satisfy 
			$$\norm{T_{\kappa}^N} = \frac{\log^\kappa(N+1)}{\log^\kappa(2)}$$
			for all $N\in \N$ and $\kappa >0$.
		\end{remark}
	\end{proposition}
	
	\medskip 
	
	We now discuss the case of positive strong Kreiss operator on (AL) and (AM)-spaces. We refer to \cite{Meyer-N}, Section 2, for more details. A Banach lattice $X$ is an $(\text{AL})$-space if the norm is additive on the positive cone on $X$, that is
	\begin{equation}\label{ALnorm}
		\quad \| x+y \| = \|x\| + \|y\|, \quad x,y \in X_+.
	\end{equation}
 A Banach lattice $X$ is an (AM)-space if the norm on $X$ satisfies
	$$
	\quad \| \sup(x,y) \| = \sup(\|x\|,\|y\|), \quad x,y \in X_+.
	$$
	If $X$ is an (AM)-space, then $X^*$ is an (AL)-space. 
	It is known that an (AL)-space is isometrically isomorphic to some $L^1$ space and that an (AM)-space is isometrically isomorphic  to some $C(K)$ where $K$ is a compact space. We are now ready to prove the following statement.
	
	\begin{proposition}\label{ALAMest}
		Let $T$ be a positive strongly Kreiss bounded operator on an (AL)-space or an (AM)-space. Then there exist $C,\kappa >0$ such that for every $N\in \N$,
		\begin{equation}
			\norm{T^N} \le C\log^\kappa (N+1).
		\end{equation}
	\end{proposition}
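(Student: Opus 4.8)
The plan is to reduce both cases to Proposition \ref{general} with $p=1$, exactly as was done for absolutely strongly Kreiss bounded operators. Recall that Proposition \ref{general} with $p=1$ (hence $q=\infty$, so the conclusion reads $\|T^N\|=\mathcal O((\log N)^\kappa)$) requires only two hypotheses: the a priori polynomial bound \eqref{general-assumption1}, which holds with $\alpha=1/2$ for any strongly Kreiss bounded operator, and the ``averaged'' bound \eqref{general-assumption2} with $p=1$, namely $\sum_{N-2\sqrt N\le n\le N}\|T^nx\|\le CN^{1/2}\|x\|$ for all $x$. So the whole proof comes down to verifying \eqref{general-assumption2} with $p=1$ in each of the two settings.

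First I would treat the (AL)-space case. Here $X$ is a Banach lattice whose norm is additive on the positive cone, so $X$ is order-isometric to an $L^1(\nu)$, and in particular $\|x\|=\|\,|x|\,\|$ and $\|\,|x|+|y|\,\|=\|x\|+\|y\|$ for all $x,y$. For $x\ge 0$ in $X$ we have $\sum_{n\ge 0}\frac{N^n}{n!}T^nx\ge \sum_{N-2\sqrt N\le n\le N}\frac{N^n}{n!}T^nx\ge \frac{c\,{\rm e}^N}{\sqrt N}\sum_{N-2\sqrt N\le n\le N}T^nx$, using the lower bound in \eqref{technical1} of Lemma \ref{technical} (for $N-2\sqrt N\le n\le N$ one has $N^n/n!\ge c\,{\rm e}^N/\sqrt N$). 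Since $T\ge 0$, all the terms $T^nx$ are $\ge 0$; taking norms and using additivity of the norm on the positive cone turns $\|\sum_{N-2\sqrt N\le n\le N}T^nx\|$ into $\sum_{N-2\sqrt N\le n\le N}\|T^nx\|$, while the left-hand side is bounded by $\|{\rm e}^{NT}x\|\le C'{\rm e}^N\|x\|$ by strong Kreiss boundedness (plugging $z=N$ into \eqref{strong-kreiss}). Rearranging gives $\sum_{N-2\sqrt N\le n\le N}\|T^nx\|\le C''\sqrt N\,\|x\|$ for $x\ge 0$, and the general case follows by splitting $x=x^+-x^-$ and using $\|T^nx\|\le\|T^nx^+\|+\|T^nx^-\|$ together with $\|x^+\|+\|x^-\|=\|\,x^+ + x^-\,\|=\|\,|x|\,\|=\|x\|$. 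Thus \eqref{general-assumption2} holds with $p=1$, and Proposition \ref{general} finishes this case.

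Next the (AM)-space case. If $X$ is an (AM)-space then $X^*$ is an (AL)-space, and the adjoint $T^*$ is positive and strongly Kreiss bounded on $X^*$ (strong Kreiss boundedness passes to adjoints since $\|{\rm e}^{zT^*}\|=\|{\rm e}^{zT}\|$, and positivity of $T$ forces positivity of $T^*$ on the dual lattice). By the (AL)-case just proved, $\|(T^*)^N\|=\mathcal O((\log N)^\kappa)$; since $\|T^N\|=\|(T^*)^N\|$ we are done. Alternatively, one can argue directly in $X$: in an (AM)-space the norm is a lattice supremum, $\|\sup_n y_n\|=\sup_n\|y_n\|$ for $y_n\ge 0$, so $\|{\rm e}^{NT}x\|\ge\|\sup_{N-2\sqrt N\le n\le N}\frac{N^n}{n!}T^nx\|\ge\frac{c\,{\rm e}^N}{\sqrt N}\sup_{N-2\sqrt N\le n\le N}\|T^nx\|$, which gives the a priori bound $\|T^n\|\le C\sqrt N/({\rm e}^N)\cdot{\rm e}^N=\mathcal O(\sqrt N)$ but, more importantly, I would still need an additive (rather than supremum) lower bound to feed \eqref{general-assumption2}; this is precisely why routing through the (AL)-space dual is the cleaner path.

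The only genuinely delicate point, and the step I expect to need the most care, is the interplay between order structure and norm when passing to the non-positive part of $x$ (and, in the (AM)-case, the verification that $T^*$ is a positive operator on the dual Banach lattice and that strong Kreiss boundedness is inherited by the adjoint). Both are standard facts about Banach lattices — the dual of a Banach lattice is a Banach lattice, positive operators have positive adjoints, and the dual of an (AM)-space is an (AL)-space — so once these are cited from \cite{Meyer-N}, the proof is a short assembly of Lemma \ref{technical}, \eqref{strong-kreiss}, and Proposition \ref{general}. I would write it as: reduce to verifying \eqref{general-assumption2} with $p=1$; prove it in the (AL)-case by the additivity computation above; deduce the (AM)-case by duality; invoke Proposition \ref{general}.
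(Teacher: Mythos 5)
Your proof is correct and follows essentially the same route as the paper: in the (AL)-case the additivity of the norm on the positive cone converts the strong Kreiss bound on $\|{\rm e}^{NT}x\|$ into the additive estimate \eqref{general-assumption2} with $p=1$ (the paper phrases this as ``$T$ is absolutely strongly Kreiss bounded'' before invoking Proposition \ref{general}, but the computation is identical), and the (AM)-case is handled by passing to the adjoint on the dual (AL)-space exactly as in the paper. Your write-up is in fact more detailed than the paper's, which leaves the additivity computation and the splitting $x=x^+-x^-$ implicit.
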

\begin{remark}
Since $L^\infty(\mathcal{M})$ is an (AM)-space, Proposition~\ref{positive} remains valid also for the case $p=\infty$ by the above result.
  \end{remark}
		\begin{proof}
			If $T$ is a positive strongly Kreiss bounded operator on an (AL)-space, then using \eqref{ALnorm}, it is straightforward that $T$ is absolutely strongly Kreiss bounded, and we can  then conclude invoking Lemma \ref{general}. 
			
			If $T$ is a positive strongly Kreiss bounded operator on an (AM)-space, then $T^*$ is a positive strongly Kreiss bounded on an (AL)-space. Then this case is reduced to the first part of the proof.
		\end{proof}
		
		\begin{question}
			In view of the considerations in Section
			5, it is natural to ask whether the bound in Proposition \ref{positive} can be improved to obtain a logarithmic bound. More precisely, for a positive strongly Kreiss bounded operator $T$ on $L^p(\M)$ with $ 1<p<\infty$, are there $C,\kappa>0$ such that for every $N\in \N$,
			$$\norm{T^N} \le C\log^{\kappa}(N+1) \;?$$ 
		\end{question}

		\section{Appendix}

		\subsection{Proof of Lemma \ref{carlson-lemma}} Let $x\ge 1$ be a real number. Set 
		$m:= \lfloor x^2\rfloor$. By the Cauchy-Schwarz inequality, we have 
		$$
		\sum_{|n|\le m+1} |c_n(h)|\le 
		\sqrt{2m+3}\Big(\sum_{|n|\le m+1}|c_n(h)|^2\Big)^{1/2} \le \sqrt{2m+3} 
		\|h\|_{L^2(\TT)}\,
		$$
		and
		\begin{align*}
			\sum_{|n|\ge m+2} |c_n(h)|&\le\Big( \sum_{|n| \ge m+2} 
			\frac1{|n|(|n|-1)}\Big)^{1/2} \Big(\sum_{|n|\ge m+2} n^2|c_n(h)|^2\Big)^{1/2} \\
			&\le \sqrt{\frac{2}{m+1}}\|h'\|_{L^2(\TT)}\, ,
		\end{align*}
		and the result follows. \qed

		\subsection{Proof of Lemma \ref{auxilliary}}
		
		We first prove that there exists $C>0$ such that 
		\begin{equation}\label{intermediary}
			\Bigg|\frac{\lambda -1}{\lambda-q(z)}\Bigg|\le \frac1{\Big(1+\frac{C |1-z|^2}{\mu}\Big)^{1/2}}\, , \quad |z| = 1 .
		\end{equation}
		The result will follow since, writing $z={\rm e}^{it}$ with 
		$t\in \T$, $\psi(t)=q({\rm e}^{it})$ and $|1-z|=2|\sin(t/2)|
		\ge 2|t|/\pi$.

		\medskip
		
		Notice that \eqref{intermediary} is equivalent to 
		$$
		\frac\mu{|1-z|^2}\Bigg(\Big|\frac{\lambda-q(z)}{\lambda -1}\Big|^2-1
		\Bigg)\ge C\, , \quad |z| = 1 \, . 
		$$
		Now, for every $z\in \C$ with $|z| = 1$,
		\begin{align*}
			\Bigg|\frac{\lambda-q(z)}{\lambda -1}\Bigg|^2-1  & = \frac{(\lambda-1)^2 + |1-q(z)|^2 +2(\lambda-1)\big(1-{\rm Re}(q(z))\big)}{(\lambda-1)^2} -1\\
			& =  \frac{2\big(1-{\rm Re}(q(z))\big)}{\lambda-1} 
			+\frac{|1-q(z)|^2}{(\lambda-1)^2}
			\\
			&  \ge \frac1\mu \min \big(2(1-{\rm Re}(q(z)),
			|1-q(z)|^2 \big)\, 
			\\ & = \frac1\mu
			|1-q(z)|^2 \,,
		\end{align*}
		where the last equality comes from 
		$$
		|1-z|^2 = 2(1-{\rm Re}  (z)),\quad  |z| = 1.
		$$
		Hence, we just have to prove that
		$$
		\frac{ |1-q(z)|^2}{|1-z|^2}\ge C \, , \quad  |z| = 1.
		$$
		However, since $q$ is a bijection of the unit circle $\{z\in \C, \, |z| = 1 \}$, this is equivalent to proving 
		\begin{equation}\label{inter-bis}
			\frac{|1-z|^2}{|1-q^{-1}(z)|^2}\ge C \, , \quad |z| = 1.
		\end{equation}
		
		It is well-known that for all $z\in \overline{\DD}$, $q^{-1}(z)={\rm e}^{i\varphi}\frac{z+a}
		{1+\bar a z}$. Since $q^{-1}(1)=1= {\rm e}^{i\varphi}\frac{1+a}
		{1+\bar a }$, we obtain
		$$
		1-q^{-1}(z) = 1-\frac{1+\bar{a}}{1+a}\frac{z+a}{1+\bar a z}= \frac{(1-|a|^2)(1-z)}{(1+a)(1+\bar{a}z)}, \quad z\in \overline{\DD}.
		$$
		Using the estimate
		$$
		|1+\bar{a}z| \ge 1-|a|, \quad |z| = 1,
		$$
		we deduce
		$$
		|1-q^{-1}(z)|^2=\frac{(1-|a|^2)^2|1-z|^2}{|1+\bar a|^2|1+\bar a z|^2}\le \frac{(1+|a|)^2}{(1-|a|)^2} |1-z|^2\, , \quad |z| = 1 .
		$$
		
		Hence, \eqref{inter-bis} holds with $C = \frac{(1-|a|)^2}{(1+|a|)^2}$, and \eqref{aux-bound} holds with $\gamma=4C/\pi^2 =4 \frac{(1-|a|)^2}{(1+|a|)^2}/\pi^2 $. \qed

		\subsection{Proof of Lemma \ref{theta}}
		
		\medskip
		
		Notice that for every $t\in \TT$, 
		\begin{equation}\label{psi'}
			\psi'(t)= {\rm e}^{i\varphi}\frac{i(1-|a|^2){\rm e }^{it}}
			{(1-\bar a {\rm e}^{it})^2} \, .
		\end{equation}
		and 
		\begin{equation}\label{psi''}
			\psi''(t)=-{\rm e}^{i(t+\varphi)}(1-|a|^2)
			\frac{1+\bar a {\rm e}^{it}}{(1-\bar a{\rm e}^{it})^3}\, .
		\end{equation}
		Hence, using \eqref{calc}, 
		\begin{equation}\label{alpha}
			\psi'(0)=i\frac{1-|a|^2}{|1-a|^2}
		\end{equation}
		Let $\alpha := \frac{1-|a|^2}{|1-a|^2} $.
		Fix $k\in \N$ and $\lambda>1$. 
		Set $m_k=m_{k,\lambda}:= \lfloor k\alpha/(\lambda-1)\rfloor$, and set
		$$\theta_k=\theta_{k,\lambda}:=e_{-m_k}
		\left[1- \frac14\Big(\frac{k\alpha}{\lambda-1}-m_k\Big)  +\frac14\Big(\frac{k\alpha}{\lambda-1}-m_k\big)e_{-4}\Big) \right]\, .
		$$
		We shall see that this function satisfies  all the required properties. 
		
		\medskip
		
		Clearly, for every $t\in \T$, we have $|\theta_k(t)|\le 1=\theta_k(0)$ and therefore \eqref{theta-infty} holds. 
		
		\medskip
		
		Let us prove \eqref{estthetachik}.

		\medskip

		Setting $u_k:= e_{m_k}\theta_k$, we have 
		\begin{align}\label{theta'}
			\theta_k' &=  -im_k \theta_k-i e_{-m_k}\Big( \frac{k\alpha}{\lambda-1}-m_k\Big)e_{-4}\, \\
			&  = -ie_{-m_k} \Big(m_k u_k(t)+ \Big(\frac{k\alpha}{\lambda-1}-m_k\Big)e_{-4}\Big)\, .
		\end{align}
		Hence, 
		\begin{equation}\label{theta0}
			\theta_k'(0)=-ik\frac{\alpha}{\lambda-1}.
		\end{equation}
		Moreover
		\begin{align*}
			(\theta_k\chi_k)' & =\frac{(\lambda-1)^k}{(\lambda -\psi)^{k+1}}\big(\theta_k'(\lambda-\psi)+ k\theta_k \psi'\big)\\
			& 
			= \frac{(\lambda-1)^k}{(\lambda -\psi)^{k+1}}e_{-m_k}\Big(ku_k\psi' -i\Big( m_k u_k +\Big(\frac{k\alpha}{\lambda -1}-m_k\Big) e_{-4}\Big)(\lambda-\psi)\Big)
			\\   &  = \frac{(\lambda-1)^ke_{-m_k}}{(\lambda -\psi)^{k}}w_k\, ,
		\end{align*}
		where
		$$
		w_k := \frac{ku_k\psi' -i\Big( m_k u_k +\Big(\frac{k\alpha}{\lambda -1}-m_k\Big) e_{-4}\Big)(\lambda-\psi)}{\lambda-\psi}.
		$$
		
		Using \eqref{theta0} and the fact that $\psi'(0)=i\alpha$, we have
		\begin{align}\label{wk}
			w_k(0)&=k\theta_k(0)\psi'(0)-im_k -i\Big(\frac{k\alpha}{\lambda -1} -m_k\Big) (\lambda - \psi(0))= 0.
		\end{align}
		
		Moreover, setting $g_k := u_k\psi'$, we have
		\begin{align*}
			w_k' =&im_k\, u_k'+u_k''+k\frac{g'(\lambda-\psi) +g_k\psi'^2}{(\lambda-\psi)^2}\, .
		\end{align*}
		
		Notice that  $|u_k'|\le 1$ and $|u_k''|\le 4$.

		\medskip
		
		From \eqref{psi'} and \eqref{psi''}, it follows that for every $t \in \T$, 
		$$|\psi'(t)|\le \frac{1-|a|^2}{(1-|a|)^2} \quad \text{and}  \quad  |\psi''(t)|\le \frac{(1+|a|)^2}{(1-|a|)^2}.$$ 
		
		\medskip

		In particular, we infer that 
		\begin{align*}
			|w_k'|&\le C\Bigg(m_k+1+k\Big(\frac{1}{\lambda-1} + \frac{1}{(\lambda-1)^2}\Big)\Bigg) 
			\\& \le D\Big( m_k+\frac{k}{\mu} \Big) \le 2D\frac{k}{\mu},
		\end{align*} for some constants $C,\, D>0$ depending only on $a$ (and not on $k\in \N$ and $\lambda>1$).
		
		\medskip
		
		Combining this bound with \eqref{wk},  we see that for every $t\in \TT$,
		\begin{align}
			\label{estim}|w_k(t)| &  \le 2D \frac{k}{\mu}|t|.
		\end{align}
		Then \eqref{estthetachik} follows from \eqref{estim} and 
		\eqref{aux-bound}.

		\medskip
		
		It remains to prove \eqref{theta-ell1}. We set $r_k = \frac{k\alpha}{(\lambda-1)}-m_k$. For every  $k\in \N$ and $\lambda > 1$, we have
		$$
		\frac{1}{\theta_k(t)} = \frac{e_{m_k}}{1-\frac{r_k}{4}}\sum_{n \in \N}\Bigg(-\frac{r_k}{4(1-\frac{r_k}{4})}\Bigg)^ne_{-4n}.
		$$
		Therefore,
		\begin{align*}
			\sum_{n\in \Z}\Big|c_n\Big(\frac{1}{\theta_k}\Big)\Big|  &  \le \frac{1}{1-\frac{r_k}{4}} \sum_{n \in \N}\Bigg(\frac{r_k}{4(1-\frac{r_k}{4})}\Bigg)^n\\
			& \qquad  =\frac{ 4}{4-r_k} \times \frac{1}{1-\frac{r_k}{4-r_k}} = 
			\frac{2}{2-r_k}\le 2\, . 
		\end{align*}
		\qed

	\end{document}